\documentclass{amsart}
\usepackage{amsmath, amssymb}
\usepackage{tikz-cd}
\usepackage{array}
\usepackage{float}

\usepackage{amsmath, amssymb, amsthm, enumitem}
\usepackage{mathrsfs}

\newtheorem{theorem}{Theorem}[section]
\newtheorem{corollary}[theorem]{Corollary}
\newtheorem{lemma}[theorem]{Lemma}
\newtheorem{proposition}[theorem]{Proposition}

\theoremstyle{definition}
\newtheorem{definition}[theorem]{Definition}
\newtheorem{example}[theorem]{Example}

\theoremstyle{remark}
\newtheorem{remark}[theorem]{Remark}

\usepackage{thmtools}
\usepackage{thm-restate}
\usepackage{hyperref}
\usepackage{cleveref}

\newcommand{\PP}{\mathbb{P}}

\keywords{algebraic hyperbolicity, weighted projective space}

\subjclass[2020]{Primary: 14E30, 14D10; Secondary: 14M25, 14J17.}

\begin{document}

\pagestyle{plain}
\title{Algebraic hyperbolicity of very general hypersurfaces in weighted projective spaces}

\author[J. Wang]{Jiahe Wang}
\address{UCLA Mathematics Department, Box 951555, Los Angeles, CA 90095-1555, USA
}
\email{jiahewang@math.ucla.edu}

\begin{abstract}
We provide a bound for $m$ such that the zero locus of a very general section of an $m$-multiple of
some ample line bundle on a weighted projective space with isolated singularities is algebraically hyperbolic.
\end{abstract}

\maketitle

\section{Introduction}
\begin{definition}
    Given a complex projective variety $X$, we say that $X$ is \emph{algebraically hyperbolic} if there exists an $\epsilon > 0$ and an ample line bundle $P$ such that for any integral curve $C \subset X$, we have that
    \[
    2g(C) - 2 \geq \epsilon \cdot \deg_P C
    \]
    where $g(C)$ is the geometric genus of $C$.
\end{definition}

 Algebraic hyperbolicity was introduced as an algebraic analogue to Kobayashi hyperbolicity for complex manifolds \cite{Demailly1997}. A complex manifold \(X\) is said to be Kobayashi hyperbolic if its Kobayashi pseudometric is nondegenerate, and Brody hyperbolic if every entire map \(f:\mathbb{C}\to X\) is constant. For smooth projective varieties, Kobayashi hyperbolicity implies algebraic hyperbolicity and the converse is conjectured to be true \cite{Demailly1997}. Brody hyperbolicity is in general weaker than Kobayashi hyperbolicity but is equivalent to Kobayashi hyperbolicity for compact manifolds \cite{Brody1978}.

The algebraic hyperbolicity of very general hypersurfaces of a smooth complex projective  variety $A$ with a group action is well-studied. In this paper, we adopt the normal bundle technique developed by Ein \cite{Ein1988, Ein1991}, Pacienza \cite{Pacienza2003}, Voisin \cite{Voisin1996, Voisin1998}, Coskun and Riedl \cite{CoskunRiedl2019,coskun2019algebraichyperbolicitygeneralsurfaces}, and Yeong \cite{yeong2022algebraichyperbolicitygeneralhypersurfaces}; see also \cite{mioranci2025algebraichyperbolicitygeneralhypersurfaces,moraga2025hyperbolicityconjectureadjointbundles} for recent works. The idea is the following: Let $\mathcal E$ be a globally generated vector bundle on a smooth projective variety $A$ admitting a Zariski-open subset on which an algebraic group acts transitively, and let $X$ be the zero locus of a very general section of $\mathcal E$. Let $C \subset X$ be a curve in $X$. Then the normal bundle $N_{C/X}$ is related to the genus by $2g-2-K_X \cdot C = \deg N_{C/X}$. If we can find a lower bound for the degree of $N_{C/X}$ by the intersection number of an ample line bundle $P$ with the curve and on the other hand express $K_X$ in terms of the ample line bundle $P$, then we get an expression of the form $2g-2 \geq a (P \cdot C) = a \deg_P C$, for some constant $a$ possibly independent of $C$. By examining the positivity of $a$, we get a sufficient condition for algebraic hyperbolicity of $X$. To bound the degree of the normal bundle, we will see that under the construction, there exists a surjection from the syzygy bundle of $\mathcal E$ to some subsheaf of $N_{C/X}$, which gives a lower bound for $\deg N_{C/X}$. By considering a section-dominating collection of $\mathcal E$, we can further improve this bound.

In \cite{coskun2019algebraichyperbolicitygeneralsurfaces}, Coskun and Riedl develop and apply this technique to very general surfaces in threefolds. In particular, the authors apply the technique to the resolution of $\mathbb{P}(1,1,1,n)$ for $n\geq1$, $f:\widetilde{\mathbb{P}} \to \mathbb{P}(1,1,1,n)$. Let $H = f^*\mathcal{O}(n)$, the authors give a bound for $m$ such that the zero locus of a very general section of $mH$ is algebraically hyperbolic.

\begin{proposition}[{\cite[Lemma 3.9, Proposition 3.10]{coskun2019algebraichyperbolicitygeneralsurfaces}}]
A very general surface $X \in |mH|$ is algebraically hyperbolic if $m \geq 4$ and $n \geq 2$; or $m = 3$ and $n \geq 4$; or $m = 2$ and $n \geq 5$. And $X$ will not be algebraically hyperbolic if $n = 1$, $m\leq4$ or $m = 2$, $n \leq 4$.
\end{proposition}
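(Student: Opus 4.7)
The plan for the forward direction is to apply the normal bundle technique from the introduction directly to $\widetilde{\mathbb{P}}$, viewed as the smooth $\mathbb{P}^1$-bundle $\mathbb{P}(\OO_{\PP^2} \oplus \OO_{\PP^2}(n))$ over $\PP^2$. First I would fix generators $\Pic(\widetilde{\mathbb{P}}) = \mathbb{Z}\langle F, E\rangle$, where $F$ is the pullback of a line and $E$ is the exceptional divisor of $f$, and express $H = f^*\OO(n)$ and $K_{\widetilde{\mathbb{P}}}$ in these coordinates via the Euler sequence of the projective bundle. I would then pick an ample test class $P = H + \lambda E$ for small $\lambda > 0$, record the intersection numbers $H\cdot F$, $H\cdot E$, $E\cdot F$, $E^2$, and note that $\widetilde{\mathbb{P}}$ has a large enough automorphism group (induced by the torus action on $\mathbb{P}(1,1,1,n)$ together with the $\mathrm{PGL}_3$-action on the base $\PP^2$) so that the section-dominating machinery of \cite{coskun2019algebraichyperbolicitygeneralsurfaces} applies to $\mathcal E = \OO(mH)$.

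For $X \in |mH|$ very general and $C \subset X$ an integral curve, I would use the restricted syzygy sequence of $\OO(mH)$ together with the inclusion $N_{C/X} \hookrightarrow N_{C/\widetilde{\mathbb{P}}}$ to produce a surjection from a twist of the syzygy bundle onto a subsheaf of $N_{C/X}$. A section-dominating subcollection (rather than the full $H^0(\OO(mH))$) is needed so that the bound does not degenerate along fibers lying in $E$. This gives $\deg N_{C/X} \geq \alpha(m,n)\,\deg_P C$ for an explicit $\alpha(m,n)$, and combining with adjunction
\[
2g(C) - 2 = (K_{\widetilde{\mathbb{P}}} + mH)\cdot C + \deg N_{C/X}
\]
yields $2g(C)-2 \geq \epsilon(m,n)\,\deg_P C$. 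The three regimes in the statement are exactly those in which $\epsilon(m,n) > 0$; verifying the sharp boundary cases $(m,n) = (4,2), (3,4), (2,5)$ reduces to explicit numerical inequalities in $(H\cdot C, E\cdot C, F\cdot C)$.

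For the non-hyperbolic direction I would exhibit explicit low-genus curves. When $n = 1$, $\widetilde{\mathbb{P}} = \PP^3$ and $X$ is a hypersurface of degree $m \leq 4$; these are uniruled for $m \leq 3$ and are K3 surfaces for $m = 4$, and in each case they contain families of rational or elliptic curves of unbounded $P$-degree, precluding any uniform $\epsilon > 0$. When $m = 2$ and $n \leq 4$, a generic section of $2H$ has the form $x_3^2 + x_3 p_n(x_0,x_1,x_2) + q_{2n}(x_0,x_1,x_2)$, presenting $X$ birationally as a double cover of $\PP^2$ branched along a curve of degree $2n \leq 8$; pulling back lines and conics from $\PP^2$ gives rational or low-genus curves on $X$ whose $P$-degrees can be made arbitrarily large, violating algebraic hyperbolicity.

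The main obstacle is the fact that $H$ is only nef and vanishes on lines in $E \cong \PP^2$, so a naive bound on $\deg N_{C/X}$ in terms of $H\cdot C$ is too weak for curves $C \subset E$. Controlling this requires picking the section-dominating collection carefully so that its jets cover enough tangent directions along $E$; this is the step where the sharp constants in the three boundary cases are actually decided, and where most of the computation will live.
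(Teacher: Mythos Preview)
The paper does not prove this proposition at all: it is quoted verbatim from \cite[Lemma~3.9, Proposition~3.10]{coskun2019algebraichyperbolicitygeneralsurfaces} and is only \emph{recalled} here as background for Propositions~\ref{prop3} and~\ref{prop4}. There is therefore no proof in the paper to compare your proposal against. What you have sketched is, in outline, essentially the argument Coskun and Riedl give in the cited reference (the projective-bundle description of $\widetilde{\mathbb P}$, a section-dominating collection for $mH$ involving both $H$ and the extra generator $F$, and the resulting normal-bundle estimate), so your plan is consistent with the source, not with anything new in this paper.

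One genuine slip in the non-hyperbolic direction: you write that pullbacks of lines and conics have ``$P$-degrees that can be made arbitrarily large.'' They cannot; lines and conics have bounded degree, and so do their preimages. Fortunately you do not need unbounded degree. A single rational or genus-one curve on $X$ already violates the inequality $2g-2\ge\epsilon\deg_P C$ for every $\epsilon>0$, so for $m=2$, $n\le 4$ it suffices to exhibit one such curve (e.g.\ the preimage of a suitably tangent line to the branch locus, or, for $n\le 3$, to note that $K_X$ is not big). Likewise for $n=1$, $m=4$ you only need that a quartic K3 contains a rational curve, not a family of unbounded degree.
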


In this paper, we see that the same setup and arguments apply to an arbitrary weighted projective space $\mathbb P(a_0,...,a_n)$ of dimension $n\geq 3$ with isolated singularities, and we yield the following results. We assume that $\mathbb P(a_0,...,a_n)$ is well-formed (see section 3).
\begin{proposition}
If $m > \frac{a_0 + \dots + a_n}{a_0a_1... a_n} + (n-2)$, then a very general hypersurface $X$ of $|mH|$ is algebraically hyperbolic outside the toric boundary.
\end{proposition}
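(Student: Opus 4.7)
The plan is to mimic the normal-bundle/syzygy-bundle technique of Coskun--Riedl, adapted to a weighted projective space of arbitrary dimension with isolated singularities. Let $f\colon \widetilde{\mathbb{P}}\to \mathbb{P}(a_0,\ldots,a_n)$ be a resolution of singularities, and take $H=f^{*}\mathcal{O}(a_0\cdots a_n)$; writing $d=a_0\cdots a_n$ and $s=a_0+\cdots+a_n$, this is the pullback of an ample Cartier divisor (pairwise coprimality of the weights is forced by the isolated singularity assumption). On the smooth locus $K_{\mathbb{P}}=-\tfrac{s}{d}H$ as $\mathbb{Q}$-Cartier divisors, so for a very general $X\in |mH|$ and any integral curve $C\subset X$ not contained in the toric boundary, adjunction combined with non-negative intersection of the exceptional divisors with $C$ gives
\[
K_X\cdot C \geq \bigl(m-\tfrac{s}{d}\bigr)\deg_H C.
\]

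Next I would produce the normal-bundle estimate $\deg N_{C/X}\geq -(n-2)\deg_H C$. The torus $(\mathbb{C}^{*})^n$ acts transitively on the complement of the toric boundary, so by the standard deformation argument of Ein, Voisin and Pacienza on the universal hypersurface family, one can choose a section-dominating collection of sections of $\mathcal{O}(mH)$ whose associated syzygy bundle $M$ restricts to give a surjection $M|_{C}\twoheadrightarrow N^{\mathrm{tf}}_{C/X}$ onto the torsion-free part of the normal bundle. Combined with a positivity estimate on $M$ this yields the claimed bound. This step is where I expect the principal obstacle to lie: one must choose the dominating collection so that the syzygy-bundle positivity computation on $\widetilde{\mathbb{P}}$ is compatible with the presence of exceptional divisors and still yields the coefficient $-(n-2)$ rather than a weaker constant. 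This is the direct generalization of the Coskun--Riedl computation for threefolds to arbitrary $n\geq 3$ and arbitrary admissible weights.

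Combining the two estimates via $2g(C)-2=K_X\cdot C+\deg N_{C/X}$ gives
\[
2g(C)-2\geq \bigl(m-\tfrac{s}{d}-(n-2)\bigr)\deg_H C,
\]
and under the hypothesis $m>\tfrac{s}{d}+(n-2)$ the coefficient is strictly positive. Taking $P=H$ and $\epsilon = m-\tfrac{s}{d}-(n-2)$ then establishes the algebraic hyperbolicity of $X$ outside the toric boundary.
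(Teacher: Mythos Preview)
Your overall strategy matches the paper's exactly, but the step you yourself flag as the principal obstacle is precisely where the paper supplies the idea you are missing. The resolution is clean: the line bundle $\mathcal{O}(k)$ on $\mathbb{P}(a_0,\ldots,a_n)$ with $k=a_0\cdots a_n$ is \emph{normally generated} (the section ring is generated in degree $1$; this is cited from Keum), and a short argument (Lemma~\ref{section} in the paper) shows that for any normally generated globally generated line bundle $L$, the singleton $\{L\}$ is already a section-dominating collection for $mL$. Thus on $\widetilde{\mathbb{P}}$ one takes simply $L=H$; the standard wedge-square trick then shows $M_H\otimes H$ is globally generated, and the resulting surjection $M_H^{\oplus s}\twoheadrightarrow M_{mH}\twoheadrightarrow \mathcal{K}$ gives $\deg \mathcal{K}|_{Y_t}\ge -(n-2)\deg_H Y_t$ with no dependence on $m$. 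You do not need to worry about ``compatibility with exceptional divisors'' here: the syzygy computation is purely a statement about $H$ on the smooth toric variety $\widetilde{\mathbb{P}}$, and $H$ is globally generated and normally generated because its section ring coincides with that of $\mathcal{O}(k)$ on $\mathbb{P}$.

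Your adjunction step also has a small wrinkle. Writing $K_{\widetilde{\mathbb{P}}}=f^*K_{\mathbb{P}}+\sum b_iE_i$ and invoking ``non-negative intersection of the exceptional divisors with $C$'' is not enough: you would also need the discrepancies $b_i$ to be non-negative, which is not automatic for cyclic quotient singularities. The paper sidesteps this entirely by observing (Lemma 3.2) that since the singularities are isolated and $\mathcal{O}(k)$ is basepoint-free, a general $X\in|mH|$ is isomorphic to its image in $\mathbb{P}$ and misses all singular points; hence $X\cap E_i=\emptyset$ for every exceptional divisor, the contribution $\sum b_i(E_i\cdot C)$ vanishes identically, and one gets the exact equality $K_X=(m-\tfrac{s}{d})H|_X$ rather than merely an inequality.
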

\begin{proposition}
For a weighted projective $3$-fold $\mathbb{P}(a_0,a_1,a_2,a_3)$ with isolated singularities, if $\mathbb{P}(a_0,a_1,a_2,a_3) \neq \mathbb{P}(1,1,1,n)$ or $\mathbb{P}(1,1,2,3)$, then a very general surface $X\in |mH|$ is algebraically hyperbolic if $m \geq 2$. For $\mathbb{P}(1,1,2,3)$, a very general surface $X\in|mH|$ is algebraically hyperbolic if $m\geq 3$.
\end{proposition}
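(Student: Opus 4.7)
The plan is to combine Proposition 1.2, which controls curves off the toric boundary, with a direct analysis of the finitely many integral curves of $X$ that lie on the toric boundary, and then take a common $\epsilon$.

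First, I would verify the numerical hypothesis of Proposition 1.2 for the claimed values of $m$. For $n=3$ this reads
\[
m > \frac{a_0+a_1+a_2+a_3}{a_0 a_1 a_2 a_3} + 1.
\]
Isolated singularities of a well-formed $\mathbb{P}(a_0,\dots,a_3)$ force the weights to be pairwise coprime, and a brief case check on sorted tuples $1\leq a_0\leq a_1\leq a_2\leq a_3$ with $\gcd(a_i,a_j)=1$ shows that $a_0 a_1 a_2 a_3 > a_0+a_1+a_2+a_3$ fails exactly for $(1,1,1,n)$ and $(1,1,2,3)$. Hence $m\geq 2$ suffices in all unexcluded cases; for $\mathbb{P}(1,1,2,3)$ one checks $\frac{7}{6}+1<3$, so $m\geq 3$ suffices. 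Proposition 1.2 then provides $\epsilon_0>0$ such that $2g(C)-2 \geq \epsilon_0 \deg_H C$ for every integral curve $C\subset X$ not contained in the toric boundary.

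Next I would handle the boundary curves. Any integral curve of $X$ contained in the toric boundary is a component of some $C_i := X \cap D_i$, where $D_i = \{x_i=0\} \cong \mathbb{P}(a_0,\dots,\hat a_i,\dots,a_3)$ is a weighted projective plane. For very general $X$, a Bertini-type argument on $D_i$ shows each $C_i$ is quasi-smooth and irreducible. Adjunction on $D_i$ gives
\[
2g(C_i)-2 \;=\; (K_{D_i}+mH|_{D_i})\cdot mH|_{D_i},
\]
and together with $(H|_{D_i})^2 = \frac{1}{\prod_{j\neq i} a_j}$ this determines $g(C_i)$ and $\deg_H C_i$ as explicit rationals in $m$ and the weights. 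For the asserted ranges of $m$ one obtains $2g(C_i)-2>0$, giving $\epsilon_i := (2g(C_i)-2)/\deg_H C_i>0$; taking $\epsilon := \min(\epsilon_0,\epsilon_1,\epsilon_2,\epsilon_3)$ witnesses algebraic hyperbolicity of $X$.

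The main obstacle is the boundary step. Checking $\epsilon_i>0$ uniformly across all four $D_i$'s and every unexcluded weighted projective $3$-fold reduces, for each $i$, to a numerical inequality of the same flavor as the first-step bound but on the weighted plane $D_i$. I expect this to hold exactly in the stated ranges, since the excluded cases are precisely those whose boundary divisors still fail the analogous genus estimate; but ruling out sporadic failures on the boundary will require a case-by-case accounting analogous to the first step.
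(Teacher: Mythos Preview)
Your approach is correct and matches the paper's proof: apply Proposition~1.2 for curves off the toric boundary, then adjunction on each $D_i$ for the boundary curves, and take the minimum $\epsilon$. The boundary step is easier than you fear: the inequality you need there is $\sum_{j\ne i}a_j < m\,a_0a_1a_2a_3$, which for $m\ge 2$ is strictly weaker than the off-boundary inequality $\sum_j a_j < a_0a_1a_2a_3$ you already verified in the unexcluded cases (and is trivially satisfied for $\mathbb{P}(1,1,2,3)$ when $m\ge 3$), so no additional case analysis is required.
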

\begin{proposition}
Let $\mathbb{P} = \mathbb{P}(a_0,\dots,a_n)$ be a weighted projective space with isolated singularities. Let
\[
\Theta := \max_{\substack{I \subset \{0,\dots,n\} \\ |I|\ge 4}} \left\{\frac1{\prod_{i\notin I}a_i} \left( \frac{\sum_{i \in I} a_i}{\prod_{i\in I} a_i} + (|I|-3) \right)\right\}.
\]
Then for every $m > \Theta$, a very general hypersurface $X \in |mH|$ is algebraically hyperbolic.
\end{proposition}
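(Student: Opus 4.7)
The plan is to reduce every curve in $X$ to the ``outside the toric boundary'' setting on some toric substratum. For a curve $C \subset X$, let $I_C \subset \{0,\dots,n\}$ be the smallest subset such that $C$ is contained in the toric stratum $T_{I_C} := V(x_j : j \notin I_C) \cong \mathbb{P}(a_i : i \in I_C)$. By minimality of $I_C$, the curve $C$ is disjoint from the toric boundary of $T_{I_C}$, and the proof branches on $|I_C|$.

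When $|I_C| \geq 4$, the stratum $T_{I_C}$ is a well-formed weighted projective space of dimension at least $3$ with isolated singularities (both properties descend from $\mathbb{P}$). The ample generator restricts by
\[
H|_{T_{I_C}} = \Bigl(\prod_{i \notin I_C} a_i\Bigr) H_{T_{I_C}},
\]
where $H_{T_{I_C}}$ denotes the ample generator on $T_{I_C}$, and a standard countable-intersection argument, using surjectivity of the restriction map on global sections, guarantees that a very general $X \in |mH|$ restricts to a very general element of $|m' H_{T_{I_C}}|$ with $m' = m \prod_{i \notin I_C} a_i$. Applying the preceding ``outside the toric boundary'' proposition to $T_{I_C}$ then yields $2g(C)-2 \geq \epsilon \deg C$ whenever $m' > \frac{\sum_{i\in I_C} a_i}{\prod_{i \in I_C} a_i} + (|I_C|-3)$, which is exactly $m > \frac{1}{\prod_{i \notin I_C} a_i}\bigl(\frac{\sum_{i\in I_C} a_i}{\prod_{i \in I_C} a_i} + (|I_C|-3)\bigr)$; the right-hand side is $\leq \Theta$ by definition, and taking the minimum of the finitely many $\epsilon$'s that arise supplies a uniform constant.

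The small-stratum cases $|I_C| \leq 3$ must be treated separately. When $|I_C| \leq 2$, the stratum $T_{I_C}$ is a point or a rational curve; since $H^0(T_{I_C}, mH|_{T_{I_C}}) \neq 0$ for $m \geq 1$, a very general $X$ does not vanish identically on $T_{I_C}$, so no such $C$ arises. When $|I_C| = 3$, the stratum $T_{I_C}$ is a weighted projective plane and $C$ is an irreducible component of the plane curve $X \cap T_{I_C}$; here weighted adjunction computes the arithmetic genus of $C$ in terms of the weights and $m' = m\prod_{i\notin I_C}a_i$, and for $m > \Theta$ the resulting genus grows fast enough in $m'$ that $2g(C)-2 \geq \epsilon \deg_H C$ follows after possibly shrinking $\epsilon$.

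The hard part will be the $|I_C|=3$ case, where the substratum reduction does not apply and one must extract a uniform bound from the weighted plane-curve adjunction formula; in contrast, the restriction identity for $H$ and the propagation of the ``very general'' hypothesis through countably many strata are conceptually routine but demand care about well-formedness and inherited isolated singularities of the nested sub-weighted-projective spaces.
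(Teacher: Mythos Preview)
Your stratification by the minimal $I_C$ is the paper's argument, reorganized: the paper applies the outside-the-boundary proposition on each stratum $\mathbb{P}_I$ with $|I|\ge 5$ and invokes its three-fold proposition on each $|I|=4$ stratum, and that three-fold proposition already contains the adjunction step for the boundary curves (your $|I_C|=3$ case). Your minimal-stratum indexing is a clean equivalent.

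One correction of emphasis: the $|I_C|=3$ case is not the hard part but the easiest. For very general $X$ the intersection $C=X\cap T_{I_C}$ is a smooth irreducible curve (Bertini for the very ample $\mathcal O(mk)|_{T_{I_C}}$, which avoids the finitely many singular points), so adjunction on $T_{I_C}$ gives
\[
2g(C)-2=\deg K_C=\Bigl(mk-\sum_{i\in I_C}a_i\Bigr)\cdot\deg_{\mathcal O(1)}C,
\]
and dividing by $\deg_H C=k\cdot\deg_{\mathcal O(1)}C$ yields the fixed ratio $\bigl(mk-\sum_{i\in I_C}a_i\bigr)/k$. This is positive once $m>\tfrac{\sum_{i\in I_C}a_i}{k}$, and that quantity is strictly smaller than the $|I|=4$ term of $\Theta$ for any $I\supset I_C$, so $m>\Theta$ already suffices with no further work. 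You should also say $C$ \emph{is} the intersection rather than ``an irreducible component'', since the adjunction computation uses the full class $\mathcal O(mk)|_{T_{I_C}}$; irreducibility is part of what the very-general hypothesis buys you and should be stated.
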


\subsection*{Acknowledgments}
I sincerely thank Wern Yeong for bringing up this project and helpful conversations. I am grateful to Burt Totaro for helpful conversations and comments on my draft. I am grateful to Eric Riedl and Sixuan Lou for helpful comments.

\section{The normal bundle technique}

We briefly recall the normal bundle technique developed and formulated in Ein \cite{Ein1988, Ein1991}, Pacienza \cite{Pacienza2003}, Voisin \cite{Voisin1996, Voisin1998}, Coskun and Riedl \cite{CoskunRiedl2019,coskun2019algebraichyperbolicitygeneralsurfaces}, and Yeong \cite{yeong2022algebraichyperbolicitygeneralhypersurfaces}. We closely follow the formulation in \cite{coskun2019algebraichyperbolicitygeneralsurfaces}. The technique bounds the degree of the normal bundle by the degree of syzygy bundle, which can be further bounded using the syzygy bundles of the line bundles in a section-dominating collection.

Let $A$ be a smooth complex projective variety of dimension $n$ that admits a Zariski-open set $A_0$ with a transitive group action by some algebraic group $G$. For example, $A$ can be a homogeneous variety such as $\mathbb{P}^n$, Grassmannians, or flag varieties, or $A$ can be a smooth toric variety. Let $\mathcal E$ be a globally generated vector bundle invariant under $G$ on $A$ of rank $r < n-1$. Let $X$ be the zero locus of a very general section of $\mathcal E$. If $X$ contains a curve of degree $e$ and genus $g$ that intersects with $A_0$,
let $V=H^0(A,\mathcal E)$,
$\mathcal X_1 \to V$ the universal hypersurface over $V$,
$\mathcal H \to V$ the relative Hilbert scheme with universal curve $\mathcal Y_1 \to \mathcal X_1$ where the general fiber $\mathcal Y_1\to \mathcal H$ is a geometric genus $g$ curve of degree $e$. 
We can find a $G$-invariant subvariety $U$ in $\mathcal H$ such that $U \to V$ is étale.
Let  $\mathcal Y_2\to U$ be the restriction of $\mathcal Y_1$ to $U$. By taking a resolution of the general fiber (and possibly further restricting $U$), we get a smooth family $\mathcal Y \to U$ whose fibers are smooth curves of genus $g$. Let $\mathcal X$ be the pullback of the family of $\mathcal X_1$ over $U$, then we get 
$\pi_1: \mathcal X \to U$, $\pi_2: \mathcal X \to A$, and $h: \mathcal Y \to \mathcal X$ which is generically injective.
\begin{definition}
Let $\mathcal{E}$ be a globally generated vector bundle on $A$. 
The \emph{syzygy bundle} or \emph{Lazarsfeld--Mukai bundle} associated to $\mathcal{E}$ is the vector bundle $M_{\mathcal{E}}$ defined by the short exact sequence
\[
0 \longrightarrow M_{\mathcal{E}} \longrightarrow H^0(A, \mathcal{E}) \otimes \mathcal{O}_A 
\stackrel{\mathrm{ev}}{\longrightarrow} \mathcal{E} \longrightarrow 0,
\]
where $\mathrm{ev}$ is the evaluation map.
\end{definition}
Let $t$ be a general element of $U$. Let $Y_t$ be the fiber of $\mathcal Y$ over $t$ and $X_t$ be the fiber of $\mathcal X$
over $t$. Let $h_t
: Y_t \to X_t$ be the restriction of $h$ to $Y_t$
.We have the following exact sequences and properties.

\vspace{1em}

\begin{tikzcd}
  & 0 \arrow[d] & 0 \arrow[d] & 0  \arrow[d]& \\
0 \arrow[r] & S \arrow[r] \arrow[d] & h^*\pi_2^* T_A \arrow[r] \arrow[d] & T \arrow[r] \arrow[d] & 0 \\
0 \arrow[r] & T_{\mathcal Y} \arrow[r] \arrow[d] & h^*T_{\mathcal X} \arrow[r] \arrow[d] & N_{h/\mathcal X} \arrow[r] \arrow[d] & 0 \\
0 \arrow[r] & T_{\mathcal Y/A} \arrow[r] \arrow[d] & h^*T_{\mathcal X/A} \arrow[r] \arrow[d] & \mathcal{K} \arrow[r] \arrow[d] & 0 \\
  & 0 & 0 & 0 &
\end{tikzcd}

\begin{proposition}[{\cite[Proposition~2.1]{coskun2019algebraichyperbolicitygeneralsurfaces}}]\leavevmode 
\begin{enumerate}
    \item $N_{h_t/X_t} \cong  N_{h/\mathcal X}|_{Y_t}$.
    \item $T_{\mathcal X/A} \cong  \pi_2^* M_{\mathcal E}$.
    \item If $A_0 = A$, then $N_{h/\mathcal X}$ is the cokernel of the map of vertical tangent spaces 
    $T_{\mathcal Y/A} \to T_{\mathcal X/A}$. If $A_0 \ne A$, then the cokernel of the map 
    $T_{\mathcal Y/A} \to T_{\mathcal X/A}$ is a sheaf $\mathcal{K}$ that injects into $N_{h/\mathcal X}$ 
    with torsion cokernel.
\end{enumerate}
\end{proposition}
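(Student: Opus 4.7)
The plan is to address the three parts in turn, using the displayed $3\times 3$ diagram as the structural backbone and deriving each sheaf-theoretic identification directly. Parts (1) and (2) are essentially formal, while the real content is in part (3), where the transitive $G$-action on $A_0$ does the work.

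For part (1), I would note that $\mathcal Y\to U$ and $\mathcal X\to U$ are flat (in fact smooth after shrinking $U$), so the cokernel defining $N_{h/\mathcal X}$ commutes with restriction to a fiber. Observe first that $N_{h/\mathcal X}$ coincides with $\mathrm{coker}(T_{\mathcal Y/U}\to h^*T_{\mathcal X/U})$ because the horizontal directions coming from $T_U$ cancel in both factors. Restricting this relative normal sequence to $Y_t$ then yields $0\to T_{Y_t}\to h_t^*T_{X_t}\to N_{h/\mathcal X}|_{Y_t}\to 0$, whose last term is $N_{h_t/X_t}$ by definition.

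For part (2), I would compute directly on the original incidence variety $\mathcal X_1\subset A\times V$, cut out by the universal section $(a,s)\mapsto s(a)$. Its $V$-derivative at $(a,s)$ is the evaluation map $\mathrm{ev}_a:V\to\mathcal E_a$, which is surjective because $\mathcal E$ is globally generated; hence $\mathcal X_1$ is smooth of the expected codimension $\mathrm{rk}\,\mathcal E$, and the vertical tangent space at $(a,s)$ is exactly $\ker(\mathrm{ev}_a)=M_{\mathcal E}|_a$. This identifies $T_{\mathcal X_1/A}\cong\pi_2^*M_{\mathcal E}$, and étale pullback along $U\to V$ transports the identification to $\mathcal X$.

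For part (3), I would assemble the diagram from two natural exact sequences: the normal-sheaf sequence $0\to T_{\mathcal Y}\to h^*T_{\mathcal X}\to N_{h/\mathcal X}\to 0$ and the relative tangent sequence $0\to h^*T_{\mathcal X/A}\to h^*T_{\mathcal X}\to h^*\pi_2^*T_A\to 0$ (exact on the right over $A_0$, where $\pi_2$ is smooth by part (2)). Setting $T_{\mathcal Y/A}:=\ker(T_{\mathcal Y}\to h^*\pi_2^*T_A)$ and applying the snake lemma to the induced map of exact sequences produces $0\to\mathcal K\to N_{h/\mathcal X}\to T\to 0$, where $\mathcal K=\mathrm{coker}(T_{\mathcal Y/A}\to h^*T_{\mathcal X/A})$ and $T=\mathrm{coker}(T_{\mathcal Y}\to h^*\pi_2^*T_A)$. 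The crux is to show $T=0$ when $A_0=A$: the construction of $U$, and hence of $\mathcal Y$, is $G$-equivariant, so the infinitesimal action $\mathfrak g\otimes\mathcal O_{\mathcal Y}\to T_{\mathcal Y}$ composes with $T_{\mathcal Y}\to h^*\pi_2^*T_A$ to a surjection, since $G$ acts transitively on $A$. When $A_0\subsetneq A$, the same argument yields surjectivity only over $\pi_2^{-1}(A_0)$, so $T$ is supported on a proper closed subset and is therefore torsion, giving $\mathcal K\hookrightarrow N_{h/\mathcal X}$ with torsion cokernel as claimed.

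The main technical hurdle is the $G$-equivariance step: checking that both the étale cover $U\to V$ and the resolution producing $\mathcal Y$ from $\mathcal Y_2$ can be chosen $G$-equivariantly, so that the infinitesimal action actually induces a morphism $\mathfrak g\otimes\mathcal O_{\mathcal Y}\to T_{\mathcal Y}$ whose composition with the differential is a genuine sheaf surjection and not merely generically surjective.
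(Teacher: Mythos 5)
The paper does not actually prove this proposition---it is quoted verbatim from \cite[Proposition~2.1]{coskun2019algebraichyperbolicitygeneralsurfaces}, and the only in-text justification is the remark that follows, whose key point (the $G$-invariance of $\mathcal Y$ makes $T_{\mathcal Y}\to h^{*}\pi_{2}^{*}T_{A}$ surjective over $A_0$, so the cokernel $T$ vanishes when $A_0=A$ and is torsion otherwise) is precisely the crux of your part (3). Your proposal reproduces the standard argument of the cited source---identifying $\mathcal X_1$ with the total space of $M_{\mathcal E}$ for (2), and deriving $0\to\mathcal K\to N_{h/\mathcal X}\to T\to 0$ from the two tangent sequences for (3)---so it is correct and takes essentially the same approach (indeed your sequences have the arrows in the mathematically correct direction, whereas the paper's printed diagram has its columns reversed).
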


 \begin{remark}
 \begin{itemize}
     \item The transitive G-action on $A_0$ is essential: $\pi_2 \circ h$ dominates $A_0$ as $Y$ is stable under G-action, so $T_{Y}\to h^{*}\pi_{2}^{*}T_{A}$ is generically surjective over $A_{0}$. Consequently, the cokernel $\mathcal K$ of $T_{Y/A}\to h^{*}T_{X/A}$ injects into $N_{h/X}$ with torsion cokernel.

\item By taking an étale cover, we only take a subvariety of curves of fixed genus and degree into consideration but this suffices as eventually the strategy will yield some $\epsilon$ independent of $C$. If there is any $\epsilon>0$ that satisfy the inequality $2g-2 \geq \epsilon\cdot \deg C$ by curves in the subvariety, then this inequality will also be satisfied by the curves with the same genus and degree. Also we note that this $\epsilon$ is independent of genus and degree.
\end{itemize} 
\end{remark}

\begin{definition} [{\cite[Definition~2.3]{coskun2019algebraichyperbolicitygeneralsurfaces}}]
Let $\mathcal{E}$ be a vector bundle on $A$.
A collection of non-trivial, globally generated line bundles
$L_1, \dots, L_u$ is called a \emph{section-dominating collection}
of line bundles for $\mathcal{E}$ if
\begin{enumerate}
    \item $\mathcal{E} \otimes L_i^{\vee}$ is globally generated for every $1 \leq i \leq u$, and
    \item the map
    \[
    \bigoplus_{i=1}^{u}
    \bigl(H^{0}(L_i \otimes \mathcal{I}_{p}) \otimes H^{0}(\mathcal{E} \otimes L_i^{\vee})\bigr)
    \longrightarrow H^{0}(\mathcal{E} \otimes \mathcal{I}_{p})
    \]
    is surjective at every point $p \in A$.
\end{enumerate}
\end{definition}

\begin{example}[{\cite[Example 2.4]{coskun2019algebraichyperbolicitygeneralsurfaces}}]
Let $A = \mathbb{P}^2 \times \mathbb{P}^1$ with $\mathcal O(H_i), i = 1,2$ the pullbacks of the $\mathcal O(1)$ under projections. 
Let $\mathcal{E} =\mathcal O(aH_1 + bH_2)$ with $a,b > 0$. Then $\mathcal O(H_1)$ and $\mathcal O (H_2)$ is a section-dominating collection for $\mathcal{E}$. This is because $ \mathbb{P}^2 \times \mathbb{P}^1$ is a homogeneous space, so we can take $p$ to be $([0,0,1],[0,1])$. 
Then $H^0\!\bigl((aH_1 + bH_2) \otimes \mathcal{I}_p\bigr)$
is the set of polynomials of bidegree $(a,b)$ in variables $x,y,z$ and $s,t$ with each monomial being divisible by $x,y$ or $s$. Because the only non-vanishing monomial at the point is $z^at^b$, it has to have zero coefficient as it cannot be canceled out by other terms. 
So
$
H^0(H_1 \otimes \mathcal{I}_p) \otimes H^0((a-1)H_1 + bH_2) \oplus
H^0(H_2 \otimes \mathcal{I}_p) \otimes H^0(aH_1 + (b-1)H_2)
\to
H^0\!\bigl((aH_1 + bH_2)\otimes \mathcal{I}_p\bigr)
$ is surjective.
\end{example}

If we have a section-dominating collection of line bundles for $\mathcal{E}$, then we get a surjection to $M_{\mathcal{E}}$:

\medskip

\begin{proposition}[{\cite[Proposition~2.6]{coskun2019algebraichyperbolicitygeneralsurfaces}}]
Let $\mathcal{E}$ be a globally generated vector bundle and $M_{\mathcal{E}}$
the Lazarsfeld--Mukai bundle associated to $\mathcal{E}$.
Let $L_1, \dots, L_u$ be a section-dominating collection of line bundles for $\mathcal{E}$.
Then for some integers $s$, there is a surjection
$$
\bigoplus_{i=1}^{u} M_{L_i}^{\oplus s} \longrightarrow M_{\mathcal{E}}
$$
induced by multiplication by some choice of basis elements of
$H^{0}(\mathcal{E} \otimes L_i^{\vee})$ for $0 \leq i \leq u$.
\medskip

\end{proposition}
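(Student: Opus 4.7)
The plan is to build the map explicitly from the data of the section-dominating collection and then verify surjectivity by reducing to a fiberwise statement that matches exactly the surjectivity required by the second condition of the section-dominating definition.

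First I would construct the map. Via the adjunction $H^0(\mathcal{E} \otimes L_i^\vee) \cong \mathrm{Hom}_{\mathcal{O}_A}(L_i, \mathcal{E})$, fix a basis $\sigma_{i,1},\dots,\sigma_{i,s_i}$ of $H^0(\mathcal{E} \otimes L_i^\vee)$. Each $\sigma_{i,j}$ is a sheaf morphism $L_i \to \mathcal{E}$; together with the linear map $H^0(L_i) \to H^0(\mathcal{E})$ given by multiplication by $\sigma_{i,j}$, it fits into a commutative square with the evaluation maps of $L_i$ and $\mathcal{E}$. Taking kernels yields a map $M_{L_i} \to M_{\mathcal{E}}$ for each $(i,j)$. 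Setting $s = \max_i s_i$ and padding with zero maps when $s_i < s$, summing over all pairs produces the desired
\[
\phi \colon \bigoplus_{i=1}^u M_{L_i}^{\oplus s} \longrightarrow M_{\mathcal{E}}.
\]

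Next I would verify surjectivity fiberwise. Global generation of $\mathcal{E}$ and the $L_i$ ensures that $M_{\mathcal{E}}$ and each $M_{L_i}$ are locally free (kernels of surjective maps between locally free sheaves). Tensoring their defining sequences with the residue field $k(p)$ at a point $p \in A$ thus preserves left-exactness and yields the identifications
\[
M_{\mathcal{E}} \otimes k(p) \cong H^0(\mathcal{E} \otimes \mathcal{I}_p), \qquad M_{L_i} \otimes k(p) \cong H^0(L_i \otimes \mathcal{I}_p).
\]
Under these, the fiber of $\phi$ at $p$ sends $\tau$ in the $j$-th copy of $H^0(L_i \otimes \mathcal{I}_p)$ to $\sigma_{i,j} \cdot \tau \in H^0(\mathcal{E} \otimes \mathcal{I}_p)$, so summing over $(i,j)$ reproduces exactly the multiplication map in condition (2), which is surjective by hypothesis. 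Nakayama's lemma applied to the coherent sheaf $\mathrm{coker}(\phi)$ then promotes fiberwise surjectivity to surjectivity of $\phi$.

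The main technical point is the fiber identification $M_{\mathcal{E}} \otimes k(p) \cong H^0(\mathcal{E} \otimes \mathcal{I}_p)$ (and its analogue for the $L_i$), which relies on local freeness of the syzygy bundles; once this is in place, matching the induced fiber map to the multiplication map is immediate. A minor bookkeeping point is that the integer $s$ in the statement should be read as any upper bound for the $h^0(\mathcal{E} \otimes L_i^\vee)$, with zero maps filling in whenever the basis is smaller.
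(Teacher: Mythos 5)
Your proof is correct, and it is the standard argument for this result: the paper itself does not reprove this proposition but cites \cite[Proposition~2.6]{coskun2019algebraichyperbolicitygeneralsurfaces}, where the map is constructed in the same way (each $\sigma_{i,j}\in H^{0}(\mathcal{E}\otimes L_i^{\vee})$ induces $M_{L_i}\to M_{\mathcal{E}}$ by functoriality of the evaluation sequences) and surjectivity is checked fiberwise via the identification $M_{\mathcal{E}}\otimes k(p)\cong H^{0}(\mathcal{E}\otimes\mathcal{I}_p)$ and condition (2) of the section-dominating definition, followed by Nakayama. Your handling of the local freeness needed for the fiber identification and the padding convention for $s$ are both fine.
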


We examine a specific scenario which is relevant to weighted projective spaces. We consider $\mathcal E = mL, m\geq1$ for some globally generated line bundle $L$. We examine when $mL$ is section dominated by $L$.
\begin{definition}
    We say a line bundle $L$ on $A$ is \emph{normally generated} if its section ring $R(A, L):= \bigoplus_{m \geq 0}  H^0(A,mL)$ is generated in degree 1, equivalently the map $\operatorname{Sym}^m H^0(A,L)\to H^0(A,mL)$ is surjective for all $m\geq 2$.
 \end{definition} 

\begin{lemma}\label{section}
Given a globally generated and normally generated line bundle $L$ on $A$, we have that $L$ is a section-dominating collection of $mL$ for $m\geq 1$.
\end{lemma}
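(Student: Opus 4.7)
The plan is to verify the two conditions of the section-dominating collection directly for the singleton collection $\{L\}$ and $\mathcal{E} = mL$. Condition (1) is immediate: $\mathcal{E}\otimes L^{\vee} = (m-1)L$ is globally generated for every $m\ge 1$, since $L$ is globally generated (and $\mathcal{O}_A$ trivially is). All the work goes into condition (2), the surjectivity of
\[
H^{0}(L\otimes \mathcal I_{p})\otimes H^{0}((m-1)L) \longrightarrow H^{0}(mL\otimes \mathcal I_{p})
\]
at every point $p\in A$.

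I would prove this surjectivity by induction on $m$. The base case $m=1$ is trivial since $H^{0}(\mathcal O_{A})=\mathbb C$ and the map becomes the identity on $H^{0}(L\otimes\mathcal I_{p})$. For the inductive step, start with $\sigma\in H^{0}(mL\otimes\mathcal I_{p})$. Normal generation of $L$ implies that $H^{0}(L)\otimes H^{0}((m-1)L)\to H^{0}(mL)$ is surjective (it factors the surjection from $\operatorname{Sym}^{m}H^{0}(L)$), so write $\sigma=\sum_{i} s_{i}t_{i}$ with $s_{i}\in H^{0}(L)$ and $t_{i}\in H^{0}((m-1)L)$.

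Because $L$ is globally generated, pick $s_{0}\in H^{0}(L)$ with $s_{0}(p)\ne 0$, and use it to decompose each factor as $s_{i}=\lambda_{i}s_{0}+s_{i}'$ where $\lambda_{i}\in\mathbb C$ and $s_{i}'\in H^{0}(L\otimes\mathcal I_{p})$. Then
\[
\sigma \;=\; \sum_{i} s_{i}'t_{i} \;+\; s_{0}\cdot T, \qquad T := \sum_{i}\lambda_{i}t_{i}\in H^{0}((m-1)L).
\]
The first sum already lies in the image, and since $\sigma$ and $\sum s_{i}'t_{i}$ both vanish at $p$ while $s_{0}(p)\ne 0$, the residual section $T$ must vanish at $p$, i.e. $T\in H^{0}((m-1)L\otimes\mathcal I_{p})$. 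This is exactly where the inductive hypothesis applies: $T=\sum_{j}u_{j}v_{j}$ with $u_{j}\in H^{0}(L\otimes\mathcal I_{p})$ and $v_{j}\in H^{0}((m-2)L)$. Multiplying by $s_{0}$ and regrouping gives $s_{0}T=\sum_{j} u_{j}(s_{0}v_{j})$, with $s_{0}v_{j}\in H^{0}((m-1)L)$ via the ordinary multiplication map. So $\sigma$ is a sum of elements of the required form and the induction closes.

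The only real subtlety is the passage from $T$ landing in $H^{0}((m-1)L\otimes\mathcal I_{p})$ (rather than $H^{0}(L\otimes\mathcal I_{p})$, as one would want on the nose) back to the desired bidegree — this is precisely what forces the argument to be inductive rather than a one-step use of normal generation, and is the step where I expect to spend the most care. Everything else is the standard "evaluation trick" of splitting off a section that is nonzero at $p$, which works because $L$ is base-point-free.
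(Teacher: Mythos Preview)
Your proof is correct and follows essentially the same idea as the paper's. The paper does the argument in one shot rather than by induction: it writes $\sigma$ as a polynomial in a basis $g_1,\ldots,g_{k-1},s$ of $H^0(L)$ with the $g_i$ vanishing at $p$ and $s(p)\neq 0$, then observes that the only monomial not containing some $g_i$ is $s^m$, whose coefficient must be zero since $\sigma(p)=0$; your induction is just the step-by-step unwinding of this same expansion and uses the identical ``evaluation trick.''
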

\begin{proof}
Since $L$ is normally generated, we have the surjection $H^0(A,L) \otimes H^0(A,(m-1)L) 
\to H^0(A,mL)$. For any point $p\in A$, we have to show that $H^0(A,L\otimes \mathcal{I}_p) \otimes H^0(A,(m-1)L) 
\to H^0(A,mL\otimes \mathcal{I}_p)$ is surjective. But this is true, as we consider the evaluation map $H^0(A, L)\to \mathbb{C}$, then the kernel of this map is a hyperplane of $H^0(A, L)$ since $L$ is globally generated. We form a basis of $H^0(A,L)$, $g_1,\dots,g_{k-1},s$ where $g_1,..g_{k-1}$ is a basis of the hyperplane and $s(p)\neq 0$. Then for any $f \in H^0(A,mL\otimes \mathcal{I}_p)$, $f$ can be written as sum of products of sections in $H^0(A,L)$. Write each section in $H^0(A,L)$ in terms of the basis and expand the terms, then we have all monomial terms contain some $g_i$, except for the single term $s^m$. So the coefficient of $s^m$ has to be zero, so each monomial term contains some $g_i$ and the map is surjective.
\end{proof}

\begin{lemma}[{\cite[Proposition~2.7]{coskun2019algebraichyperbolicitygeneralsurfaces}}]\label{lem2}
Given a surjection from $M_L$ to a vector bundle $N$ on a curve $C$, we have that $\deg N \geq -\operatorname{rank}(N)\deg L|_C$.
\end{lemma}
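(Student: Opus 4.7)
The plan is to pull back the Lazarsfeld--Mukai defining sequence to $C$ and then exploit the fact that subsheaves of a trivial bundle on a smooth projective curve have non-positive degree.

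First I would pull back the short exact sequence $0 \to M_L \to H^0(A,L) \otimes \mathcal{O}_A \to L \to 0$ along the morphism $C \to A$. Because $L$ is a line bundle (hence flat), pullback preserves exactness, yielding $0 \to M_L|_C \to H^0(A,L) \otimes \mathcal{O}_C \to L|_C \to 0$. In particular, $\deg M_L|_C = -\deg L|_C$ and $M_L|_C$ embeds as a subbundle of the trivial bundle $H^0(A,L) \otimes \mathcal{O}_C$.

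Next, letting $K$ denote the kernel of the given surjection $M_L|_C \to N$, I would note that $K$ is a subsheaf of $M_L|_C$, and therefore also a subsheaf of $H^0(A,L) \otimes \mathcal{O}_C$. Any subsheaf of a trivial bundle on a smooth projective curve has non-positive degree: after saturating, the quotient is generated by the constant sections of the trivial bundle, so its determinant line bundle is effective. Hence $\deg K \leq 0$, and additivity of degree in $0 \to K \to M_L|_C \to N \to 0$ yields $\deg N = -\deg L|_C - \deg K \geq -\deg L|_C$.

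To finish, since $L$ is globally generated, $\deg L|_C \geq 0$, and this stronger estimate $\deg N \geq -\deg L|_C$ immediately implies the claimed bound $\deg N \geq -\operatorname{rank}(N)\deg L|_C$ whenever $\operatorname{rank}(N) \geq 1$ (the rank-zero case being vacuous). No real obstacle is expected; the only point deserving a line of care is the flatness justification for preserving exactness of the defining sequence under pullback, and the standard fact that subsheaves of a trivial bundle on a curve have non-positive degree.
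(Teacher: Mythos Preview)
Your argument is correct and in fact yields the sharper inequality $\deg N \ge -\deg L|_C$, which implies the stated bound. It is, however, a genuinely different route from the paper's. The paper does not use the kernel $K$ at all: instead it takes the second exterior power of the defining sequence to obtain
\[
0 \longrightarrow \wedge^2 M_L \longrightarrow \wedge^2\bigl(H^0(L)\otimes\mathcal{O}\bigr) \longrightarrow M_L\otimes L \longrightarrow 0,
\]
observes that the middle term is trivial, and concludes that $M_L\otimes L$ (hence $N\otimes L$) is globally generated, so $\deg(N\otimes L)\ge 0$. Your approach is more elementary and gives a stronger conclusion for this lemma. The advantage of the paper's approach is structural: the statement ``$M_L\otimes L$ is globally generated'' passes unchanged to direct sums, so from a surjection $(M_L)^{\oplus s}\twoheadrightarrow N$ one immediately gets that $N\otimes L$ is globally generated and hence $\deg N \ge -\operatorname{rank}(N)\deg L|_C$. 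This is exactly what is needed in the next lemma (Lemma~\ref{lemma}), where the section-dominating collection produces such a surjection from a direct sum. Your kernel argument applied to $(M_L)^{\oplus s}\twoheadrightarrow N$ would only give $\deg N \ge -s\deg L|_C$, which is useless since $s$ is uncontrolled.
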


\begin{proof}
The authors in \cite{coskun2019algebraichyperbolicitygeneralsurfaces} considered the case when $N$ has rank 1, but the same proof applies to higher ranks. We recall the proof. Consider the short exact sequence
\[
0 \to M_L \to \mathcal{O} \otimes H^0(L) \to L \to 0.
\]
Taking the second wedge power of the sequence, we get
\[
0 \to \wedge^2 M_L \to \wedge^2(\mathcal{O} \otimes H^0(L)) \to M_L(L) \to 0.
\]
Since $\wedge^2(\mathcal{O} \otimes H^0(L))$ is trivial, we have $M_L(L)$ is globally generated, and hence, so is $N(L)$. Since the degree of $N(L)$ must be non-negative, we have $\deg N \geq -\operatorname{rank}(N)\deg L|_C$.
\end{proof}

\begin{lemma}\label{lemma}
If $mL$ is section-dominated by $L$ and we have a surjection from $M_{mL}$ to a vector bundle $N$, then $\deg N \geq -\operatorname{rank}(N)\deg L|_C$.
\end{lemma}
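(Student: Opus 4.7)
The plan is to chain together Proposition 2.6 with the argument already used in Lemma \ref{lem2}. First, since $mL$ is section-dominated by $L$ (a single-bundle collection), Proposition 2.6 supplies a surjection
\[
M_L^{\oplus s} \twoheadrightarrow M_{mL}
\]
on $A$ for some integer $s$. Composing with the hypothesized surjection $M_{mL} \twoheadrightarrow N$ and restricting to the curve $C$ yields a surjection $M_L^{\oplus s}|_C \twoheadrightarrow N$ of sheaves on $C$.

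Next, I would twist this surjection by $L|_C$ to obtain a surjection $\bigl(M_L(L)\bigr)^{\oplus s}|_C \twoheadrightarrow N(L|_C)$. Here I invoke the key observation from the proof of Lemma \ref{lem2}: from the wedge-square of the defining sequence of $M_L$, the bundle $M_L(L)$ is globally generated on $A$, hence so is its restriction to $C$, and so is any direct sum of copies. Being a quotient of a globally generated sheaf, $N(L|_C)$ is then globally generated on $C$.

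Finally, since $N$ is a vector bundle on the curve $C$, so is $N(L|_C)$, and a globally generated vector bundle of rank $r$ on a smooth curve has non-negative determinant: choosing $r$ sections that generate $N(L|_C)$ at a general point gives an injection $\mathcal{O}_C^{\oplus r} \hookrightarrow N(L|_C)$, whose determinant is a nonzero map $\mathcal{O}_C \to \det N(L|_C)$, forcing $\deg \det N(L|_C) \ge 0$. Unwinding,
\[
0 \le \deg N(L|_C) = \deg N + \operatorname{rank}(N) \deg L|_C,
\]
which rearranges to the claimed inequality. There is no real obstacle here: the only minor point to watch is that $N$ is assumed to be a vector bundle on $C$ (so that the twist and the determinant argument go through cleanly), and that Proposition 2.6 is being applied to the degenerate collection consisting of the single line bundle $L$.
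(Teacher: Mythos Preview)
Your proof is correct and follows essentially the same route as the paper: invoke Proposition~2.6 to obtain a surjection $M_L^{\oplus s}\twoheadrightarrow M_{mL}\twoheadrightarrow N$, use the wedge-square exact sequence to see that $M_L(L)$ (hence $(M_L(L))^{\oplus s}$ and its quotient $N\otimes L$) is globally generated, and conclude that $\deg(N\otimes L)\ge 0$. The only cosmetic difference is that you restrict to $C$ before twisting and spell out why a globally generated bundle on a curve has nonnegative degree, whereas the paper keeps the argument on $A$ until the last line.
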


\begin{proof}
We have the surjections $(M_L)^{\oplus s} \to M_{mL}\to N$.

We have the exact sequence 
\[
0 \longrightarrow (\wedge^2 M_L)^{\oplus s} 
\longrightarrow (\wedge^2 (H^0(L) \otimes \mathcal{O}_A))^{\oplus s} 
\longrightarrow (M_L)^{\oplus s} \otimes L 
\longrightarrow 0.
\] 

Since $(\wedge^2 (H^0(L) \otimes \mathcal{O}_X))^{\oplus s}$ is trivial so it is globally generated, so 
$(M_L)^{\oplus s} \otimes L$ and $N \otimes L$ are globally generated. So $\deg(N\otimes L) = \operatorname{rank}(N)\deg(L)+\deg (N) \geq 0$, so $\deg(N)\geq- \operatorname{rank}(N) \deg L|_C$.
\end{proof}
\begin{remark} If we only consider the surjection $M_{mL}\to N$ without section-dominating collection, then we get that $\deg N \geq -\operatorname{rank}(N)m \deg L|_C$. The 
$m$-dependence makes it ineffective on weighted projective spaces as in the next section.
\end{remark}

Recall the construction of the universal family. Let $t$ be a general element of $U$. Let $Y_t$ be the fiber of $\mathcal Y$ over $t$ and $X_t$ be the fiber of $\mathcal X$. Similar to Theorem 1.2 in \cite{coskun2019algebraichyperbolicitygeneralsurfaces}, we get:
\begin{lemma}\label{lem}
Let $\mathcal E = mL$ on $A$ with $\operatorname{dim}A = n$, such that $L$ is a section-dominating collection of $mL$. 
We have that \[2g(Y_t)- 2 - K_{X_t} \cdot h_t(Y_t) =\deg(N_{h_t/X_t}) \geq \deg(\mathcal{K}|_{Y_t})\geq -(n-2)\deg L|_{Y_t}
.\]

\begin{proof}
The first equality and inequality are shown in \cite[Lemma 2.2]{coskun2019algebraichyperbolicitygeneralsurfaces}: The first equality is from the exact sequence
$
0 \to T_{Y_t} \to h_t^* T_{X_t} \to N_{h_t/X_t} \to 0.
$ For the inequality, we have a surjection from $h^*\pi_2^*M_{\mathcal E}$ onto $\mathcal{K}$, which injects into $N_{h/X}$. Restricting to $Y_t$, we obtain a surjection onto a free subsheaf of $N_{h_t/X_t}$ of the same rank, which shows that the degree of $N_{h_t/X_t}$ is at least the degree of $\mathcal{K}|_{Y_t}$.

By Lemma \ref{lemma}, we have the last inequality.
\end{proof}
\end{lemma}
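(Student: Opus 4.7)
My plan is to extract each piece of the asserted chain from a different ingredient already set up in the excerpt: the equality from standard adjunction on the curve $h_t(Y_t)\subset X_t$; the first inequality from the injection $\mathcal{K}\hookrightarrow N_{h/\mathcal X}$ with torsion cokernel, combined with a general-fiber argument; and the final inequality by feeding the third row of the big diagram (via $T_{\mathcal X/A}\cong \pi_2^*M_{\mathcal E}$) into Lemma \ref{lemma}.

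For the equality, I start from the normal bundle sequence
\[0\to T_{Y_t}\to h_t^*T_{X_t}\to N_{h_t/X_t}\to 0\]
on the smooth curve $Y_t$. Taking determinants gives $\det N_{h_t/X_t}\cong K_{Y_t}\otimes h_t^*K_{X_t}^{\vee}$, and passing to degrees yields $\deg N_{h_t/X_t} = 2g(Y_t)-2 - K_{X_t}\cdot h_t(Y_t)$. For the middle bound $\deg N_{h_t/X_t} \geq \deg \mathcal{K}|_{Y_t}$, I use that $\mathcal{K}\hookrightarrow N_{h/\mathcal X}$ has torsion cokernel. For $t\in U$ general, the fiber $Y_t$ avoids the support of this torsion in $\mathcal X$, so restriction produces an injection $\mathcal{K}|_{Y_t}\hookrightarrow N_{h_t/X_t}$ of sheaves of the same rank with torsion quotient of degree zero, giving the inequality.

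For the last inequality, the bottom row of the diagram is a surjection $h^*T_{\mathcal X/A}\twoheadrightarrow \mathcal{K}$, and substituting $T_{\mathcal X/A}\cong \pi_2^*M_{mL}$ produces a surjection $(\pi_2\circ h)^*M_{mL}\twoheadrightarrow \mathcal{K}$. Restricting to $Y_t$ gives a surjection onto $\mathcal{K}|_{Y_t}$, a bundle of rank $\dim X_t - 1 = n-2$. Lemma \ref{lemma}, applied with $N=\mathcal{K}|_{Y_t}$ and the hypothesis that $L$ section-dominates $mL$, then yields $\deg \mathcal{K}|_{Y_t} \geq -(n-2)\deg L|_{Y_t}$. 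The step I expect to need the most care is the torsion-cokernel argument: I must verify that for a generic $t$ the restriction to $Y_t$ preserves both injectivity of $\mathcal{K}\hookrightarrow N_{h/\mathcal X}$ and the torsion nature of the cokernel, which should reduce to choosing $t$ outside a proper closed subset of $U$ using flatness of the universal curve $\mathcal{Y}\to U$.
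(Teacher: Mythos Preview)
Your approach is essentially the paper's own: the equality from the normal sequence on $Y_t$, the middle inequality from the injection $\mathcal K\hookrightarrow N_{h/\mathcal X}$ with torsion cokernel, and the final bound from Lemma~\ref{lemma} applied to the rank-$(n-2)$ quotient of $M_{mL}$.

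One point to tighten: your claim that for general $t$ the fiber $Y_t$ \emph{avoids} the support of the torsion cokernel is not justified and is typically false. That torsion is supported over the locus where $T_{\mathcal Y}\to h^*\pi_2^*T_A$ fails to surject, i.e.\ over $(\pi_2\circ h)^{-1}(A\setminus A_0)$, and a general curve $Y_t$ does meet $A\setminus A_0$ in finitely many points. What you actually need is weaker: restriction to $Y_t$ keeps the quotient torsion (right-exactness), so on the smooth curve $Y_t$ the induced map $\mathcal K|_{Y_t}\to N_{h_t/X_t}$ has torsion cokernel of nonnegative length, giving $\deg N_{h_t/X_t}\ge \deg(\text{image})$. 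The paper phrases this as obtaining a surjection from $M_{mL}|_{Y_t}$ onto a locally free \emph{subsheaf} of $N_{h_t/X_t}$ of the same rank; working with the image rather than with $\mathcal K|_{Y_t}$ itself sidesteps any question about injectivity after restriction. With that adjustment your argument goes through unchanged.
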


\section{Weighted projective space} 
\subsection{Preliminaries} 

We recall some basic facts about weighted projective spaces; standard references include \cite{Dolgachev1982,Fletcher1989,Fulton1993}.
\begin{definition}
    A weighted projective space $\mathbb P(a_0, ... , a_n)$ is \emph{well-formed} if for each $i$, we have that $\mathrm{gcd}(a_0,...,\widehat{a_i},...,a_n)=1$.
\end{definition}

Any weighted projective space is isomorphic to a well-formed weighted projective space. Given a well-formed weighted projective space, the singularities are distinguished by toric strata, and are determined by the following: for any $I\subset \{0,...,n\}$, if $g:=\mathrm{gcd}(a_i, i\in I)\neq 1$, then each point $p$ in the toric stratum $x_i\neq 0, i\in I$ and $x_i =0, i\notin I$ in
a neighborhood is analytically  $\mathbb C^{|I|-1}\times (\mathbb C^{\,n-|I|+1}/\mu_g)$, where for $\mathbb C^{\,n-|I|+1}/\mu_g$ the action is $\zeta\mapsto(\zeta^{a_0} x_0, \zeta^{a_1} x_1,...\widehat{\zeta^{a_i} x_i},..., \zeta^{a_n} x_n)$ omitting all $i\in I$ and we denote it by $\frac 1g(a_0 , ..., \widehat{a_i},... ,a_n)$. For example, in $\mathbb{P}(1,1, 2,4)$, the singularities are given by the toric strata $(0,0,1,0)$, $(0,0,0,1)$, $(0,0,*,*)$ where $*$ represents nonzero numbers.

We consider specifically in this paper about weighted projective spaces $\mathbb{P}(a_0,\ldots,a_n)$ with isolated singularities, so the singular points are among the torus fixed points $(1,0,...,0), (0,1,0...,0)$ $,...,(0,...,0,1)$. We see that if it is well-formed, then this is equivalent to say that the parameters are pairwise coprime.  

The Picard group of a general well-formed $\mathbb{P}(a_0,\ldots,a_n)$ is:
\[ \operatorname{Pic}(\mathbb{P}(a_0,\ldots,a_n)) = \mathbb{Z} \cdot \mathcal{O}(k) \text{ where } k = \operatorname{lcm}(a_0,\ldots,a_n). \]
For weighted projective spaces with isolated singularities, we have $k = \prod_i a_i$.
\begin{lemma}[{\cite[Proposition 2.1, Example 5.1]{Keum1997}}]
Let $\mathbb P(a_0,...,a_n)$ be a weighted projective space, then $\mathcal O(k)$ is a very ample line bundle and $\mathcal O(k)$ is normally generated. 
\end{lemma}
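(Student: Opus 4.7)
The plan is to establish normal generation of $\mathcal{O}(k)$ first, and derive very ampleness as a consequence, using throughout the description $\mathbb{P}(a_0,\dots,a_n) = \operatorname{Proj} R$ where $R = \mathbb{C}[x_0,\dots,x_n]$ is graded by $\deg x_i = a_i$. In this language global sections of $\mathcal{O}(d)$ are monomials of weighted degree $d$, and normal generation of $\mathcal{O}(k)$ is precisely the statement that the $k$-th Veronese subring $R^{(k)} := \bigoplus_{m \ge 0} R_{mk}$ is generated as a ring by its degree-one piece $R_k$.

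For normal generation I would induct on $m$, showing that every monomial $x^\alpha$ of weighted degree $mk$ factors as a product of $m$ monomials each of weighted degree $k$. The inductive step reduces to finding $\beta \in \mathbb{Z}_{\ge 0}^{n+1}$ with $\beta \le \alpha$ componentwise and $\sum_i a_i \beta_i = k$, so that $x^{\alpha} = x^{\beta} \cdot x^{\alpha-\beta}$ has the desired decomposition. If some $\alpha_i \ge k/a_i$ then $\beta = (k/a_i)\,e_i$ works and we reduce to the $(m-1)$-case; this situation is automatic whenever $m \ge n+1$, since otherwise $mk = \sum_i a_i \alpha_i < \sum_i a_i(k/a_i) = (n+1)k$. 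For the remaining finite range $m \le n$, one produces $\beta$ by a direct argument in the numerical semigroup $\langle a_0,\dots,a_n\rangle$: because $k = \operatorname{lcm}(a_i)$, each $a_i$ divides $k$, so $k$ admits many representations $k = \sum_i a_i \beta_i$, and one selects one whose support is contained in the support of $\alpha$.

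Once normal generation is in hand, very ampleness follows with little extra work. The pure monomials $x_i^{k/a_i}$ (well-defined because $a_i \mid k$) have no common zero, so $\mathcal{O}(k)$ is base-point-free and the morphism $\phi_k \colon \mathbb{P}(a_0,\dots,a_n) \to \mathbb{P}(H^0(\mathcal{O}(k))^{\vee})$ is well-defined; normal generation yields a surjection $\operatorname{Sym}^{\bullet} R_k \twoheadrightarrow R^{(k)}$, identifying the image of $\phi_k$ with $\operatorname{Proj} R^{(k)}$ as a closed subscheme of $\mathbb{P}(R_k)$, and the standard Veronese isomorphism $\operatorname{Proj} R \cong \operatorname{Proj} R^{(k)}$ then identifies $\phi_k$ itself with a closed embedding.

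The principal obstacle is the combinatorial step in the normal generation argument for the bounded range $m \le n$: when $\alpha$ lies in the ``small box'' $\prod_i [0,k/a_i)$, no pure-monomial reduction applies, and one must construct $\beta$ by combining several coordinates at once, relying crucially on the fact that $k$ is the least common multiple of the weights so that every $a_i$ divides $k$ and $k$ has many representations in the semigroup $\langle a_0,\dots,a_n\rangle$.
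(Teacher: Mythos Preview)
The paper does not give a proof of this lemma at all; it simply quotes the result from Keum's paper, so there is no argument here to compare yours against.

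As for your attempt itself, the overall architecture is sound: reducing normal generation of $\mathcal O(k)$ to the combinatorial assertion that every monomial of weighted degree $mk$ splits off a factor of weighted degree $k$, and then deducing very ampleness from normal generation via the Veronese identification $\operatorname{Proj} R\cong\operatorname{Proj} R^{(k)}$, are both correct reductions. The problem is that the combinatorial core is not actually proved. In the ``small box'' case $\alpha_i<k/a_i$ for all $i$ with $2\le m\le n$, you claim one can choose a representation $k=\sum_i a_i\beta_i$ ``whose support is contained in the support of $\alpha$,'' but this is doubly insufficient: first, support containment is strictly weaker than the inequality $\beta\le\alpha$ you need (nothing prevents $\beta_i>\alpha_i$ on a coordinate in the common support); second, you give no reason why even the support condition can be arranged. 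You yourself identify this as ``the principal obstacle,'' and that is accurate---this step carries essentially all of the content of the normal generation statement, and it is not a routine verification. The argument as written is therefore a proof outline with its main lemma missing, not a proof.
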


\begin{theorem}[{\cite[p.~48]{Fulton1993}}]
    For any toric variety $X(\Delta)$, there is a refinement $\widetilde{\Delta}$ of $\Delta$ so that $X(\widetilde{\Delta})\to X(\Delta)$ is a resolution of singularities.
\end{theorem}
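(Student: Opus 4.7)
The plan is to construct the refinement $\widetilde{\Delta}$ explicitly by iteratively subdividing the cones of $\Delta$, using the standard fact that $X(\Delta)$ is smooth if and only if every cone $\sigma \in \Delta$ is \emph{unimodular}, i.e.\ its primitive ray generators form part of a $\mathbb{Z}$-basis of the lattice $N$. The proof proceeds in two stages: first, reduce to the case where $\Delta$ is simplicial, then reduce the multiplicity of each simplicial cone until the fan is smooth.

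First I would reduce to the simplicial case. Given any non-simplicial cone $\sigma \in \Delta$, perform a \emph{star subdivision} centered at a chosen lattice point $v$ in the relative interior of $\sigma$: replace $\sigma$ by the cones $\tau + \mathbb{R}_{\geq 0} v$, where $\tau$ ranges over the faces of $\sigma$ not containing $v$. Iterating this over the finitely many non-simplicial cones of $\Delta$, treating cones of maximal dimension first, yields a simplicial fan that still refines $\Delta$.

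Second, for a simplicial cone $\sigma$ with primitive generators $v_1,\ldots,v_k$, define the multiplicity $\mathrm{mult}(\sigma) := [N_\sigma : \mathbb{Z} v_1 + \cdots + \mathbb{Z} v_k]$, where $N_\sigma$ is the sublattice of $N$ spanned by the $\mathbb{R}$-span of $\sigma$. Then $\sigma$ is smooth if and only if $\mathrm{mult}(\sigma)=1$. When $\mathrm{mult}(\sigma) > 1$, there exists a nonzero lattice point $v \in \sigma \cap N$ of the form $v = \sum_i t_i v_i$ with $0 \leq t_i < 1$; performing the star subdivision of $\sigma$ at $v$ produces simplicial subcones each of which has strictly smaller multiplicity. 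Since multiplicity is a positive integer, iterating this on each cone finitely many times terminates in a smooth refinement $\widetilde{\Delta}$, and the induced toric morphism $X(\widetilde{\Delta}) \to X(\Delta)$ is then proper and birational because $\widetilde{\Delta}$ has the same support as $\Delta$.

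The main obstacle is to ensure the successive star subdivisions are \emph{compatible across shared faces} so they globally define a fan refinement, rather than merely locally subdividing individual cones. This is arranged by always choosing subdivision centers in the relative interior of a cone (so lower-dimensional cones of $\Delta$ are untouched when that cone is processed) and by performing the operations in order of decreasing dimension; then every new cone is automatically a face of a cone adjacent to it, so the result is a genuine fan.
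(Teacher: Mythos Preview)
The paper does not give its own proof of this theorem: it is simply quoted from Fulton's book \cite{Fulton1993} and used as a black box to guarantee the existence of a toric resolution $\widetilde{\mathbb P}\to\mathbb P$. Your proposal is essentially the standard argument that appears in Fulton (simplicialize by star subdivision, then drive down multiplicities by further star subdivisions at interior lattice points), so in that sense it agrees with what the paper is invoking.

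One small imprecision in your write-up: the compatibility of star subdivisions across the fan is not really handled by ``processing in order of decreasing dimension.'' The clean way to see it is that a face of a simplicial (resp.\ smooth) cone is again simplicial (resp.\ smooth), so it suffices to subdivide only the \emph{maximal} cones of the current fan; choosing each center $v$ in the relative interior of a maximal cone leaves its boundary, and hence all shared faces with neighboring cones, untouched. With that adjustment your argument is correct.
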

\begin{remark}
In particular, for a weighted projective space, there exists a toric resolution $f:\widetilde{\mathbb P}\to \mathbb P$. This shows that after resolving the singularities, we still get a toric variety, which satisfies the condition to apply the technique in the previous section.
\end{remark}

Sometimes, it may be helpful to understand the Picard group of the resolution $\widetilde{\mathbb P}$ so that one may gain a better understanding on the Picard group of its hypersurfaces, which potentially provides further information on algebraic hyperbolicity; see the proof of \cite[Proposition 3.10]{coskun2019algebraichyperbolicitygeneralsurfaces}. But in general, calculating the Picard group is delicate as it depends on the cyclic quotient singularities on the weighted projective spaces. 

\begin{example}  At the end of \cite{coskun2019algebraichyperbolicitygeneralsurfaces}, Coskun and Riedl consider $\mathbb{P}(1,1,1,n)$. The local depiction around $(0,0,0,1)$ is $\mathbb{C}^3/\mathbb{Z}_n$ acting by $(\zeta_n x, \zeta_n y, \zeta_n z)$ or say $\frac{1}{n} (1,1,1)$.   This singularity is special because it is actually a cone singularity: We have that the quotient is given by $\operatorname{Spec } \mathbb{C}[x,y,z]^{\mathbb Z_n} = \operatorname{Spec } \mathbb{C}[x^n, x^{n-1}y, x^{n-1}z,\ldots,z^n]$. This is exactly the affine cone of $\mathbb{P}^2$ under the Veronese embedding. We know all cone singularities can be resolved by a single blow-up of the vertex. So one may get that the Picard group of $\widetilde{\mathbb P}$ is generated by $H:= f^*\mathcal O(n)$ and $F$ where $nF:= H-E$, $E$ is the only exceptional divisor. But this case is very special. In general, it requires multiple blow-ups to resolve an isolated singularity, and non-isolated singularities only make the situation more complex.
\end{example}

\subsection{Weighted projective space with isolated singularities}
We consider a well-formed weighted projective space $\mathbb{P}(a_0,a_1,\dots,a_n)$ of dimension $n\geq 3$. We have that the weights are pairwise coprime and $\operatorname{Pic}(\mathbb{P}) = \mathbb{Z} \mathcal{O}(k )$, where $k=a_0a_1\dots a_n$. Consider a toric resolution $f: \widetilde{\mathbb{P}} \to \mathbb{P}$. Let $H = f^*\mathcal{O}(k)$. We would like to find a lower bound of $m$ such that the zero locus of a very general section of $mH, m\geq 1$ is algebraically hyperbolic.

\begin{lemma}
$H^0(\mathbb{P}, \mathcal O(mk)) \cong  H^0(\widetilde{\mathbb{P}}, mH)$ and a general hypersurface of \(H^0(\mathbb{P}, \mathcal O(mk))\) is isomorphic to a general hypersurface of \(H^0(\widetilde{\mathbb{P}}, mH)\).
\end{lemma}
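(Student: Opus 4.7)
The plan is to prove both statements by exploiting the fact that a well-formed weighted projective space $\mathbb{P}$ with isolated singularities has only cyclic quotient singularities, which are rational, plus the fact that the (finitely many) singular points of $\mathbb{P}$ lie among the torus-fixed points.

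First, I would establish the cohomological equality. Since the singularities of $\mathbb{P}$ are toric quotient singularities $\tfrac{1}{a_i}(a_0,\dots,\widehat{a_i},\dots,a_n)$, they are rational. Hence for the toric resolution $f:\widetilde{\mathbb{P}}\to\mathbb{P}$ we have $Rf_{*}\mathcal{O}_{\widetilde{\mathbb{P}}}=\mathcal{O}_{\mathbb{P}}$. Applying the projection formula to $H=f^{*}\mathcal{O}(k)$ gives
\[
Rf_{*}(mH)\;=\;Rf_{*}\bigl(f^{*}\mathcal{O}(mk)\bigr)\;\cong\;\mathcal{O}(mk)\otimes Rf_{*}\mathcal{O}_{\widetilde{\mathbb{P}}}\;=\;\mathcal{O}(mk).
\]
Taking global sections yields the natural isomorphism $f^{*}\colon H^{0}(\mathbb{P},\mathcal{O}(mk))\xrightarrow{\sim}H^{0}(\widetilde{\mathbb{P}},mH)$.

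Second, I would check that for a very general section $\sigma\in H^{0}(\mathbb{P},\mathcal{O}(mk))$, the hypersurface $X=V(\sigma)$ avoids the singular locus $\operatorname{Sing}(\mathbb{P})\subset\{p_{0},\dots,p_{n}\}$, where $p_{i}$ is the $i$-th torus-fixed point. For each $p_{i}$, the monomial $x_{i}^{mk/a_{i}}\in H^{0}(\mathbb{P},\mathcal{O}(mk))$ does not vanish at $p_{i}$ (note $a_{i}\mid mk$ since $k=\prod_{j}a_{j}$), so $p_{i}$ is not a base point of $|mk|$. Since there are only finitely many $p_{i}$, a general $\sigma$ is nonvanishing at each of them, so $X\cap\operatorname{Sing}(\mathbb{P})=\emptyset$.

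Third, I would conclude the isomorphism of hypersurfaces. The morphism $f$ restricts to an isomorphism $\widetilde{\mathbb{P}}\setminus E\xrightarrow{\sim}\mathbb{P}\setminus\operatorname{Sing}(\mathbb{P})$, where $E$ is the exceptional locus. Because $X$ avoids $\operatorname{Sing}(\mathbb{P})$, the divisor $\widetilde{X}:=V(f^{*}\sigma)$ has no exceptional component (along each exceptional divisor $E_{j}$, the pullback $f^{*}\sigma$ has vanishing order $\mathrm{ord}_{E_{j}}(f^{*}\sigma)=\mathrm{ord}_{f(E_{j})}(\sigma)=0$), so $\widetilde{X}\subset\widetilde{\mathbb{P}}\setminus E$ and $f|_{\widetilde{X}}\colon\widetilde{X}\to X$ is an isomorphism.

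The main technical input is the vanishing $Rf_{*}\mathcal{O}_{\widetilde{\mathbb{P}}}=\mathcal{O}_{\mathbb{P}}$; everything else is a routine Bertini-type genericity argument combined with the fact that singularities live at the torus-fixed points. Since quotient (in particular toric) singularities are rational, this step is standard and causes no real trouble.
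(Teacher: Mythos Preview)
Your proof is correct and follows essentially the same approach as the paper: projection formula for the $H^0$ isomorphism, then base-point-freeness of $|mk|$ at the finitely many torus-fixed singular points to conclude that a general section avoids $\operatorname{Sing}(\mathbb{P})$, so that $f$ restricts to an isomorphism of hypersurfaces. The only minor difference is that you invoke rational singularities to get $Rf_{*}\mathcal{O}_{\widetilde{\mathbb{P}}}=\mathcal{O}_{\mathbb{P}}$, whereas the paper uses only $f_{*}\mathcal{O}_{\widetilde{\mathbb{P}}}=\mathcal{O}_{\mathbb{P}}$ (which follows from normality of $\mathbb{P}$ and birationality of $f$); for the $H^0$ statement this weaker fact already suffices.
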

\begin{proof}
    
By the projection formula we have that $f_*(f^*\mathcal{O}(k)\otimes \mathcal{O}_{\widetilde{\mathbb{P}}}) = \mathcal{O}(k)\otimes f_*\mathcal{O}_{\widetilde{\mathbb{P}}} = \mathcal{O}(k)\otimes \mathcal{O}_{\mathbb{P}}$, so $f_*f^*\mathcal{O}(k) = \mathcal{O}(k)$, so we have that $H^0(\widetilde{\mathbb{P}}, f^*\mathcal{O}(k)) \cong H^0(\mathbb{P}, \mathcal{O}(k))$. Also $\mathcal O(k)$ is base-point-free, so for any point $p$ there exists one section and thus general sections avoid $p$. Since the weighted projective space may only have singularities among points $(1,\dots,0)$, $(0,1,0,\dots,0)$,$\dots$,$(0,\dots,0,1)$, we have that the zero locus of a general section $s$ in $H^0(\mathbb{P}, \mathcal{O}(k))$ avoids all singular points, and so $Z(s)$ is isomorphic to $Z(f^*s)$.
\end{proof}
We can also find the Picard group of the zero locus of a general section of $H^0(\widetilde{\mathbb{P}}, mH)$.
Even though this information is not required.

\begin{theorem}[{\cite[Theorem 1]{ravindra2005grothendiecklefschetztheoremnormalprojective}}]
Let $X$ be an irreducible projective variety over an algebraically closed field of characteristic 0, regular in codimension 1, and let $L$ be an ample line bundle over $X$, together with a linear subspace $V \subset H^0(X, L)$ which gives a base-point-free ample linear system $|V|$ on $X$. For a dense Zariski open set of $Y \in |V|$, the restriction map $\operatorname{Cl}(X) \to \operatorname{Cl}(Y)$ is an isomorphism if $\dim X \geq 4$, and is injective with finitely generated cokernel if $\dim X = 3$.
\end{theorem}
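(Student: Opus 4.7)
The plan is to reduce the statement to the classical Grothendieck--Lefschetz theorem on smooth quasi-projective varieties by working on the smooth locus of $X$, using the hypothesis that $X$ is regular in codimension $1$ to identify class groups with Picard groups of smooth loci.

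First I would arrange that a general $Y \in |V|$ is itself normal. Since $X$ is normal (being irreducible, satisfying $R_1$, together with $S_2$ which one either adds or verifies in the setting) and $|V|$ is base-point-free, a Bertini-type theorem in characteristic zero shows that for $Y$ in a dense open of $|V|$, $Y$ is irreducible and $\operatorname{Sing}(Y)$ has codimension $\geq 2$ in $Y$. Set $U := X \setminus \operatorname{Sing}(X)$, whose complement has codimension $\geq 2$ in $X$; the standard identification for normal varieties gives $\operatorname{Cl}(X) \cong \operatorname{Pic}(U)$, and analogously $\operatorname{Cl}(Y) \cong \operatorname{Pic}(Y \cap U)$ after observing that $Y^{\mathrm{sm}} \setminus (Y \cap U)$ has codimension $\geq 1$ in $Y$. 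The restriction of class groups thus reduces to the Picard restriction $\operatorname{Pic}(U) \to \operatorname{Pic}(W)$, where $W := Y \cap U$ is a smooth effective ample divisor in the smooth quasi-projective variety $U$.

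Next I would invoke the Grothendieck--Lefschetz theorem (SGA 2) applied to $(U, W)$. Via the formal completion $\hat{U}$ of $U$ along $W$, one factors
\[
\operatorname{Pic}(U) \longrightarrow \operatorname{Pic}(\hat{U}) \longrightarrow \operatorname{Pic}(W),
\]
with two key inputs: (i) the right-hand arrow is injective once $\dim U \geq 3$, using the vanishing $H^1(W, L^{-k}|_W) = 0$ for $k \geq 1$; (ii) the left-hand arrow is an isomorphism once $\dim U \geq 4$, via algebraization of formal line bundles using $H^2(W, L^{-k}|_W) = 0$. The required cohomology vanishings on $U$ rather than on $X$ follow from Hartogs extension, since $X \setminus U$ has codimension $\geq 2$, so the low-degree cohomology of locally free sheaves is unchanged after restricting from $X$ to $U$.

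The main obstacle is the algebraization step in dimension $3$: formal line bundles on $\hat{U}$ along a smooth surface $W$ need not come from algebraic line bundles, which is exactly why the conclusion weakens to injectivity with finitely generated cokernel. One controls this cokernel by exhibiting it as a subquotient of an $H^2$ group that is finite-dimensional by Serre finiteness. A secondary technical subtlety is the Bertini-for-normality step when $V \subsetneq H^0(X, L)$: one must verify that $|V|$ is sufficiently rich along $\operatorname{Sing}(X)$ to force $R_1$ on generic $Y$, which uses ampleness of $|V|$ together with a dimension count along the codimension-$\geq 2$ singular stratum of $X$.
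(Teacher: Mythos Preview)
The paper does not prove this theorem at all: it is quoted verbatim from Ravindra--Srinivas and used as a black box in the subsequent lemma computing $\operatorname{Pic}(X)\otimes\mathbb{Q}$. There is therefore no ``paper's own proof'' against which to compare your proposal.

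That said, your sketch is broadly the correct strategy for the cited result and is close to what Ravindra--Srinivas actually do: pass to the smooth locus, identify class groups with Picard groups there, and run Grothendieck--Lefschetz via the formal completion along $W$. Two points in your write-up would need tightening if this were a genuine proof. First, the hypothesis is only $R_1$, not normality; you parenthetically say $S_2$ is ``added or verified,'' but in fact one must argue directly with Weil divisors on the $R_1$ locus rather than assume normality. Second, the claim that ``low-degree cohomology of locally free sheaves is unchanged after restricting from $X$ to $U$'' by Hartogs is not right as stated: Hartogs-type extension controls $H^0$ across codimension $\ge 2$, but comparing $H^1$ and $H^2$ requires a local cohomology/depth argument, and on a singular $X$ the relevant depth bounds are not automatic. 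These are exactly the technical points the Ravindra--Srinivas paper is devoted to handling.
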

\begin{lemma}
Let $X$ be the zero locus of a general section of $mH$ in $\widetilde{\mathbb{P}}$, then $\operatorname{Pic}(X)\otimes \mathbb{Q} = \mathbb{Q} H$ if $\operatorname{dim } \mathbb P\geq4$.
\end{lemma}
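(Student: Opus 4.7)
The plan is to bypass the resolution $\widetilde{\mathbb{P}}$ entirely — on which $H = f^*\mathcal{O}(k)$ is only nef — and apply the Ravindra--Srinivas theorem cited above directly to the singular space $\mathbb{P}$, where the linear system $|mk|$ is ample and base-point-free. The isomorphism $X \cong Y$ supplied by the previous lemma then lets me transfer the conclusion back to $X \subset \widetilde{\mathbb{P}}$.

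First I would use the previous lemma to identify $X$ with its image $Y := f(X) \subset \mathbb{P}$, which is the zero locus of a general section of $\mathcal{O}(mk)$; by construction $Y$ avoids the finitely many singular points of $\mathbb{P}$, so $Y$ is smooth and $f|_X \colon X \xrightarrow{\sim} Y$. In particular $\operatorname{Pic}(Y) = \operatorname{Cl}(Y)$, and $H|_X$ corresponds to $\mathcal{O}_{\mathbb{P}}(k)|_Y$ under this identification.

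Next I would check the hypotheses of the Ravindra--Srinivas theorem for $\mathbb{P}$ with $L = \mathcal{O}(mk)$: $\mathbb{P}$ is irreducible and projective; its singular locus is a finite set of points, so since $\dim \mathbb{P} \geq 4$ the space is regular in codimension $\geq 3$, a fortiori in codimension $1$; and $\mathcal{O}(mk)$ is ample with $|mk|$ base-point-free. The theorem then yields that the restriction $\operatorname{Cl}(\mathbb{P}) \to \operatorname{Cl}(Y)$ is an isomorphism for general $Y$.

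To finish I would invoke the standard fact that $\operatorname{Cl}(\mathbb{P}(a_0,\ldots,a_n)) = \mathbb{Z}$, generated by the class $D$ of $\mathcal{O}(1)$, with $\mathcal{O}(k) = kD$ in $\operatorname{Cl}(\mathbb{P})$. Combining the steps gives $\operatorname{Pic}(Y) \otimes \mathbb{Q} = \mathbb{Q} \cdot D|_Y = \mathbb{Q} \cdot \mathcal{O}_{\mathbb{P}}(k)|_Y$, and pulling back along $f|_X$ yields $\operatorname{Pic}(X) \otimes \mathbb{Q} = \mathbb{Q} H$. The only genuine subtlety is keeping straight the distinction between $\operatorname{Pic}$ and $\operatorname{Cl}$ on the singular space $\mathbb{P}$, where they differ by a factor of $k$; this is precisely the reason the statement is only claimed after tensoring with $\mathbb{Q}$, and why transferring the Picard-group computation through the singular model $\mathbb{P}$ (rather than working on $\widetilde{\mathbb{P}}$ directly, where $H$ is not ample) is the natural route.
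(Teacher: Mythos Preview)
Your proposal is correct and follows essentially the same route as the paper: apply the Ravindra--Srinivas theorem on the singular $\mathbb{P}$ to the ample base-point-free system $|\mathcal{O}(mk)|$, then use smoothness of the general member to pass from $\operatorname{Cl}$ to $\operatorname{Pic}$ and the fact that $\operatorname{Cl}(\mathbb{P})\otimes\mathbb{Q}=\mathbb{Q}$. Your version is slightly more careful in distinguishing $X\subset\widetilde{\mathbb{P}}$ from its isomorphic image $Y\subset\mathbb{P}$, whereas the paper tacitly conflates them via the previous lemma, but the argument is otherwise identical.
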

\begin{proof}
    Since $\mathcal O(mk)$ on $\mathbb P$ is very ample, we apply the theorem above, and we have that for a general element $X$, $\operatorname{Cl}(\mathbb{P}) = \operatorname{Cl}(X)$. Since a general $X$ is smooth by Bertini's Theorem applied to $\widetilde{\mathbb P}$, we have $\operatorname{Cl}(X) = \operatorname{Pic}(X)$. Also we have $\mathbb{P}$ is $\mathbb{Q}$-factorial so $\operatorname{Pic}(X) \otimes \mathbb{Q} = \operatorname{Cl}(\mathbb P)\otimes \mathbb Q = \operatorname{Pic}(\mathbb P) \otimes \mathbb{Q} = \mathbb{Q}H$.
\end{proof}

\begin{lemma}
 $mH$ is section-dominated by $H$, for $m\geq 1$.
\end{lemma}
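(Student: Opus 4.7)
The plan is to reduce the statement to Lemma \ref{section}, which says that any globally generated, normally generated line bundle $L$ forms a section-dominating collection for all of its positive multiples $mL$. So it suffices to verify that $H = f^*\mathcal{O}(k)$ is globally generated and normally generated on $\widetilde{\mathbb{P}}$.

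Global generation is the easy half: $\mathcal{O}(k)$ is very ample on $\mathbb{P}$ by the Keum result (Lemma 3.4), hence globally generated, and pullback under $f$ preserves global generation.

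For normal generation, the strategy is to transfer the normal generation of $\mathcal{O}(k)$ on $\mathbb{P}$ (again cited from Keum) to $H$ on $\widetilde{\mathbb{P}}$ via the identification $H^0(\widetilde{\mathbb{P}}, mH) \cong H^0(\mathbb{P}, \mathcal{O}(mk))$. This identification follows from the projection formula and the fact that toric resolutions satisfy $f_*\mathcal{O}_{\widetilde{\mathbb{P}}} = \mathcal{O}_{\mathbb{P}}$ (the argument given in the previous lemma for $m=1$ works verbatim for any $m \geq 1$, since $f_*f^*\mathcal{O}(mk) = \mathcal{O}(mk) \otimes f_*\mathcal{O}_{\widetilde{\mathbb{P}}} = \mathcal{O}(mk)$). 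I would then note that the multiplication map $\operatorname{Sym}^m H^0(\widetilde{\mathbb{P}}, H) \to H^0(\widetilde{\mathbb{P}}, mH)$ is compatible, under these isomorphisms, with the multiplication map $\operatorname{Sym}^m H^0(\mathbb{P}, \mathcal{O}(k)) \to H^0(\mathbb{P}, \mathcal{O}(mk))$, since pullback is a ring homomorphism on section rings. The latter map is surjective for $m \geq 2$ by the normal generation of $\mathcal{O}(k)$, and hence so is the former, giving normal generation of $H$.

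I do not anticipate a serious obstacle here: once the projection-formula identification of sections upstairs and downstairs is spelled out, the statement reduces mechanically to an already-invoked property of $\mathcal{O}(k)$ on $\mathbb{P}$ together with Lemma \ref{section}. The only point to be mildly careful about is checking the compatibility of the symmetric-power multiplication maps with the pullback identification, but this is formal since $f^*$ is a ring homomorphism on the section ring.
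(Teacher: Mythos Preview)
Your proposal is correct and follows essentially the same route as the paper: identify the section ring of $H$ with that of $\mathcal{O}(k)$ (the paper asserts this in one clause, you spell out the projection-formula justification), invoke the normal generation of $\mathcal{O}(k)$, and apply Lemma~\ref{section}. Your version is simply more detailed, in particular making explicit the global-generation hypothesis and the compatibility of multiplication maps that the paper leaves implicit.
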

\begin{proof}
The section ring of $H$ is isomorphic to the section ring of $\mathcal O(k)$, which is normally generated. So, by Lemma \ref{section}, $mH$ is section-dominated by $H$.
\end{proof}
\begin{definition}
 A complex projective variety is \emph{algebraically hyperbolic outside a sub-variety $Z \subsetneq X$} if there exists $\epsilon > 0$ and an ample line bundle $P$ such that $2g(C)-2 \geq\epsilon\cdot\deg_P C$ for any integral curve $C
 \subset X$ not contained in $Z$, where $g(C)$ is the geometric genus of $C$. 
\end{definition}

\begin{proposition}\label{prop2}
 If $m > \frac{a_0 + \dots + a_n}{a_0a_1... a_n} + (n-2)$, then a very general hypersurface $X$ of $|mH|$ is algebraically hyperbolic outside the toric boundary.
\end{proposition}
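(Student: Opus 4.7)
The plan is to invoke Lemma~\ref{lem} with $A = \widetilde{\mathbb{P}}$, $L = H$, and $\mathcal{E} = mH$. The toric variety $\widetilde{\mathbb{P}}$ is smooth, its algebraic torus acts transitively on the open orbit $A_0 = \widetilde{\mathbb{P}} \setminus (\text{toric boundary})$, the line bundle $mH$ is globally generated and torus-invariant (being pulled back from $\mathbb{P}$), the rank condition $r = 1 < n-1$ holds since $n \geq 3$, and $mH$ is section-dominated by $H$ by the preceding two lemmas. For an integral curve $C\subset X$ not contained in the toric boundary (so that $C$ meets $A_0$), the construction of Section~2 and Lemma~\ref{lem} therefore yield
\[
2g(C) - 2 \;\geq\; K_X \cdot C \;-\; (n-2)\, H \cdot C.
\]

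The heart of the proof is then computing $K_X \cdot C$ in terms of $H \cdot C$ alone. The key geometric fact I would establish first is that the very general $X \in |mH|$ is disjoint from every exceptional divisor $E_i$ of $f\colon\widetilde{\mathbb{P}}\to\mathbb{P}$: since $H^0(\widetilde{\mathbb{P}},mH)\cong H^0(\mathbb{P},\mathcal{O}(mk))$ and $\mathcal{O}(mk)$ is base-point-free on $\mathbb{P}$, a very general $Z(s)\in|\mathcal{O}(mk)|$ avoids the finitely many singular points, so its pullback $X$ misses every $E_i$; consequently $E_i\cdot C = 0$ for any curve $C\subset X$. Next, pairwise coprimality of the weights gives $k = a_0 \cdots a_n = \operatorname{lcm}(a_0,\dots,a_n)$ and hence $K_{\mathbb{P}} = -\tfrac{a_0+\cdots+a_n}{k}\mathcal{O}(k)$ as a $\mathbb{Q}$-Cartier class, so $f^*K_{\mathbb{P}} = -\tfrac{a_0+\cdots+a_n}{k}H$. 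Writing $K_{\widetilde{\mathbb{P}}} = f^*K_{\mathbb{P}} + \sum_i b_i E_i$ and using $E_i\cdot C=0$, adjunction gives
\[
K_X\cdot C \;=\; \bigl(K_{\widetilde{\mathbb{P}}} + mH\bigr)\cdot C \;=\; \left(m - \tfrac{a_0+\cdots+a_n}{a_0\cdots a_n}\right) H\cdot C,
\]
and substituting back produces
\[
2g(C) - 2 \;\geq\; \left(m - \tfrac{a_0+\cdots+a_n}{a_0\cdots a_n} - (n-2)\right) H\cdot C \;=\; \epsilon\cdot \deg_H C,
\]
with $\epsilon > 0$ under the hypothesis on $m$ and independent of $C$. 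Taking $P = H$ gives algebraic hyperbolicity outside the toric boundary.

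The main obstacle I anticipate is structural rather than computational: verifying that the Section~2 machinery applies cleanly to the toric resolution $\widetilde{\mathbb{P}}$ (with $A_0$ the open torus orbit) and that the ``meets $A_0$'' condition there is exactly the ``not contained in the toric boundary'' condition appearing in the proposition. The critical geometric observation that a very general $X$ avoids the exceptional locus is what kills all the discrepancy contributions in $K_{\widetilde{\mathbb{P}}}$, leaving only the term proportional to $H$; once this is in place, the adjunction calculation is routine.
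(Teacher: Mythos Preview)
Your proof is correct and follows the same strategy as the paper: apply Lemma~\ref{lem} on the toric resolution $\widetilde{\mathbb{P}}$ and then compute $K_X\cdot C$ by adjunction. The only cosmetic difference is that the paper identifies the very general $X\in|mH|$ with its image in $\mathbb{P}$ (via the earlier lemma that $H^0(\widetilde{\mathbb{P}},mH)\cong H^0(\mathbb{P},\mathcal{O}(mk))$ and a general section avoids the singular points) and does adjunction directly on $\mathbb{P}$, whereas you stay on $\widetilde{\mathbb{P}}$, write $K_{\widetilde{\mathbb{P}}}=f^*K_{\mathbb{P}}+\sum b_iE_i$, and kill the discrepancy terms by $E_i\cdot C=0$; these are two phrasings of the same computation.
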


\begin{proof}
For a very general hypersurface $X$ of $|mH|$, we can identify it as a very general hypersurface in $|\mathcal O(mk)|$. We have:
\[ K_X = (K_{\mathbb{P}} + X)|_X = (K_{\mathbb{P}} + mH)|_X \]

\[ K_X = \left(m - \frac{a_0 + \dots + a_n}{a_0a_1... a_n}\right)H|_X \]
where $K_{\mathbb{P}} = \mathcal O(-a_0-a_1-...-a_n)$.

By Lemma \ref{lem}, for a curve $C\subset X$ not entirely contained in the toric boundary of $\widetilde{\mathbb P}$, we have that
\[ 2g - 2 \geq -(n-2)\deg H|_C + K_X\cdot C=-(n-2)H \cdot C + \left(m - \frac{a_0 + \dots + a_n}{a_0a_1... a_n}\right)H \cdot C \]
\[= \left(-(n-2) + m - \frac{a_0 + \dots + a_n}{a_0a_1... a_n}\right) \deg_H C\]

If $m > \frac{a_0 + \dots + a_n}{a_0a_1... a_n} + (n-2)$, we have $X$ is algebraically hyperbolic outside the toric boundary.
\end{proof}

\begin{remark}
If we don't use the fact that $mH$ being section dominated by $H$, then from Lemma \ref{lem2}, we get that $2g-2 \geq \left(-(n-2)m +m-\frac{a_0 + \dots + a_n}{a_0a_1... a_n} \right)\deg_HC$, making  $-(n-2)m +m-\frac{a_0 + \dots + a_n}{a_0a_1... a_n} $ negative for all $m\geq 1$.
\end{remark}

\begin{example}For $\mathbb P(1,1,2,3,5)
$, $m> (1+1+2+3+5)/(1\cdot1\cdot2\cdot 3\cdot 5) +2 = 12/5$, so $m\geq 3$ would make a very general $X$ algebraically hyperbolic outside its intersection with the toric boundary,  which is $\mathbb P(1,2,3,5)\cup \mathbb P(1,2,3,5)\cup \mathbb P(1,1,3,5)\cup \mathbb P(1,1,2,5)\cup \mathbb P(1,1,2,3)$.
\end{example}

We recall the bounds of $m$ for $\mathbb P(1,1,1,n)$ in Coskun and Riedl \cite{coskun2019algebraichyperbolicitygeneralsurfaces}.

\begin{proposition}
[{\cite[Lemma 3.9, Proposition 3.10]{coskun2019algebraichyperbolicitygeneralsurfaces}}]\label{prop1}
A very general surface $X \in |mH|$ is algebraically hyperbolic if $m \geq 4$ and $n \geq 2$; or $m = 3$ and $n \geq 4$; or $m = 2$ and $n \geq 5$. And $X$ will not be algebraically hyperbolic if $n = 1$, $m\leq4$ or $m = 2$, $n \leq 4$.
\end{proposition}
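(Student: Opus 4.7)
The plan is to apply the normal bundle technique (Lemma \ref{lem}) on the resolution $f: \widetilde{\mathbb{P}} \to \mathbb{P}(1,1,1,n)$ for the cases $m \geq 3$, combine it with adjunction and a Noether--Lefschetz refinement for the borderline case $m = 2, n \geq 5$, and produce explicit low-genus curves for the negative assertions.

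First, I would set up the calculation on the resolution. The unique singularity of $\mathbb{P}(1,1,1,n)$ at $(0,0,0,1)$ is a cone singularity of type $\tfrac{1}{n}(1,1,1)$, resolved by a single blow-up with exceptional divisor $E \cong \mathbb{P}^2$; the discrepancy is $\tfrac{3-n}{n}$, so $K_{\widetilde{\mathbb{P}}} = -\tfrac{n+3}{n}H + \tfrac{3-n}{n}E$ where $H = f^*\mathcal{O}(n)$, and $\operatorname{Pic}(\widetilde{\mathbb{P}}) = \mathbb{Z}\langle H, F\rangle$ with $nF = H - E$. Since a very general $X \in |mH|$ is the pullback of a very general degree-$mn$ hypersurface in $\mathbb{P}(1,1,1,n)$ (which avoids the singular point), one has $E|_X = 0$ and $K_X = \tfrac{mn-n-3}{n}H|_X$. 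Because $\mathcal{O}(n)$ is normally generated on $\mathbb{P}(1,1,1,n)$, so is $H$, and Lemma \ref{section} applies. Then Lemma \ref{lem} with $L = H$ and $\dim \widetilde{\mathbb{P}} = 3$ gives, for any irreducible $C \subset X$ (automatically disjoint from $E$),
\[
2g - 2 \;\geq\; K_X \cdot C \;-\; \deg H|_C \;=\; \tfrac{mn - 2n - 3}{n}\,\deg_H C.
\]
The coefficient is positive iff $mn > 2n + 3$, which handles the ranges $m \geq 4, n \geq 2$ and $m = 3, n \geq 4$.

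For the case $m = 2, n \geq 5$, the raw coefficient is $-3/n < 0$, so I would supplement with the adjunction formula $2g - 2 = K_X \cdot C + C^2$, where $K_X \cdot C = \tfrac{n-3}{n}\deg_H C$. To lower bound $C^2$, I would invoke a Noether--Lefschetz-type statement: for very general $X$ with $n \geq 5$, $\operatorname{Pic}(X) = \mathbb{Z}[F|_X]$ where $F|_X = H|_X/n$. An irreducible $C \subset X$ then has class $dF|_X$ for some positive integer $d$, and a direct intersection computation on $\widetilde{\mathbb{P}}$ gives $(F|_X)^2 = 2$, $\deg_H C = 2nd$, and $C^2 = 2d^2 \geq \deg_H C /n$; combining with adjunction yields $2g - 2 \geq \tfrac{n-2}{n}\deg_H C > 0$. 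The principal obstacle is establishing the correct Noether--Lefschetz statement in this weighted setting: it must fail for $n \in \{3, 4\}$ (consistent with the negative assertions, since $n = 3$ gives a K3 surface and $n = 4$ gives a surface with very small canonical class) while holding exactly from $n \geq 5$ onward.

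Finally, for the negative assertions I would exhibit explicit curves with small geometric genus relative to $H$-degree. For $n = 1$, $X \subset \mathbb{P}^3$ has degree $\leq 4$: degrees $1, 2$ give rational surfaces; degree $3$ contains the $27$ lines; degree $4$ is a quartic K3 with countably many rational curves and one-parameter families of elliptic curves. For $m = 2, n \leq 4$: at $n = 3$, $K_X = 0$ makes $X$ a K3 surface with nodal rational curves in its primitive class; at $n = 1, 2$, $K_X$ is anti-effective and rational curves abound; at $n = 4$, $K_X = \tfrac{1}{4}H|_X$ is only mildly positive and a direct adjunction computation produces a one-parameter family of low-genus curves that defeats the algebraic hyperbolicity inequality for every $\epsilon > 0$.
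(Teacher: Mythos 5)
The paper does not actually prove this proposition --- it is quoted verbatim from Coskun--Riedl --- so your attempt can only be measured against their argument. Your treatment of the cases $m\ge 4,\ n\ge 2$ and $m=3,\ n\ge 4$ is essentially correct and is the same instantiation of Lemma \ref{lem} that underlies Propositions \ref{prop2} and \ref{prop3}: the coefficient $\tfrac{mn-2n-3}{n}$ is right, as are $K_{\widetilde{\mathbb{P}}}$ and $E|_X=0$. The one point you should make explicit is why \emph{every} curve $C\subset X$ is reached by the technique: Lemma \ref{lem} applies only to curves meeting the open orbit $A_0$, so you need the full automorphism group of $\mathbb{P}(1,1,1,n)$ (which acts transitively off the singular point, hence off $E$ after resolving), not merely the torus. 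With only the torus action you would still have to treat curves lying in the three boundary divisors $\{x_i=0\}$ separately, as the paper does in the proof of Proposition \ref{prop3}.

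The genuine gap is in the case $m=2,\ n\ge 5$. Adjunction on the smooth surface $X$ computes the \emph{arithmetic} genus, $2p_a(C)-2=K_X\cdot C+C^2$, and $g(C)\le p_a(C)$ for an integral curve, so the identity you invoke gives an \emph{upper} bound on $2g-2$, not the lower bound you need; knowing $\operatorname{Pic}(X)=\mathbb{Z}\,F|_X$ and $C\equiv dF|_X$ does not by itself rule out highly singular members of $|dF|_X|$ of small geometric genus. Moreover, the Noether--Lefschetz statement you rely on is exactly the hard input and is left unproved; note that the Ravindra--Srinivas theorem quoted in the paper gives only injectivity of restriction when the ambient variety is a threefold, not surjectivity. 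A route that actually closes this case uses the extra geometry: $|F|$ exhibits $X\in|2H|$ as a double cover of $\mathbb{P}^2$ branched along a very general curve of degree $2n$ (since $(F|_X)^2=2$), and one can then run the hyperbolicity analysis for cyclic covers. This also pins down the failure at $m=2,\ n=4$ concretely --- the preimage of a bitangent line of the degree-$8$ branch curve is an elliptic curve --- which is more precise than the unspecified ``one-parameter family of low-genus curves'' you allude to. Your sketches for $n=1,\ m\le 4$ and $m=2,\ n\le 3$ are fine.
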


We generalize the result to arbitrary weighted projective spaces with isolated singularities. 

\begin{proposition}\label{prop3}
For a weighted projective $3$-fold $\mathbb{P}(a_0,a_1,a_2,a_3)$ with isolated singularities, if $\mathbb{P}(a_0,a_1,a_2,a_3) \neq \mathbb{P}(1,1,1,n)$ or $\mathbb{P}(1,1,2,3)$, then a very general surface $X\in |mH|$ is algebraically hyperbolic if $m \geq 2$. For $\mathbb{P}(1,1,2,3)$, a very general surface $X\in|mH|$ is algebraically hyperbolic if $m\geq 3$.
\end{proposition}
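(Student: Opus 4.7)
The plan is to combine Proposition~\ref{prop2}, which handles curves not contained in the toric boundary, with a direct adjunction analysis for curves that are contained in the toric boundary.

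First I would verify, by a short case analysis on pairwise coprime $4$-tuples, that $\sum_i a_i < \prod_i a_i$ for every $\mathbb{P}(a_0,a_1,a_2,a_3)$ not of the form $\mathbb{P}(1,1,1,n)$ or $\mathbb{P}(1,1,2,3)$: after sorting the weights, the only borderline case is $a_0=a_1=1$, where the inequality reduces to $(a_2-1)(a_3-1)>3$, and this fails exactly on the two excluded families. Consequently, Proposition~\ref{prop2} yields algebraic hyperbolicity outside the toric boundary with interior ratio $\epsilon_{\mathrm{int}} = m - 1 - (\sum_i a_i)/k > 0$ as soon as $m\ge 2$. For $\mathbb{P}(1,1,2,3)$ the same proposition requires $m > 7/6 + 1$, giving the threshold $m\ge 3$.

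Next I would analyse an irreducible curve $C\subset X$ contained in the toric boundary of $\widetilde{\mathbb{P}}$. Since $\mathcal{O}_{\mathbb{P}}(k)$ is base-point-free and $\operatorname{Sing}(\mathbb{P})$ is a finite set of torus-fixed points, a very general $X$ avoids $\operatorname{Sing}(\mathbb{P})$ and hence avoids every exceptional divisor of $f:\widetilde{\mathbb{P}}\to\mathbb{P}$; moreover, $X\cap D_i\cap D_j$ is zero-dimensional, so $C$ is not contained in any $D_i\cap D_j$. Therefore $C$ is a divisorial component of $X\cap D_i$ in a single coordinate divisor $D_i \cong \mathbb{P}(a_0,\dots,\widehat{a_i},\dots,a_3)$. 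The restriction map $H^0(\mathbb{P},\mathcal{O}(mk)) \to H^0(D_i,\mathcal{O}_{D_i}(mk))$ is surjective with kernel $x_i \cdot H^0(\mathcal{O}(mk-a_i))$, and $\mathcal{O}_{D_i}(mk)$ is very ample, so a very general $X$ restricts to a general member of $|\mathcal{O}_{D_i}(mk)|$; Bertini's smoothness and connectedness theorems then force $X\cap D_i$ to be a smooth irreducible curve lying in the smooth locus of $D_i$, and hence $C = X\cap D_i$. Adjunction on $D_i$, using $\mathcal{O}_{D_i}(1)^2 = 1/k_i$ with $k_i = \prod_{j\neq i} a_j$ and $K_{D_i}=\mathcal{O}(-\sum_{j\neq i}a_j)$, gives
\[
\frac{2g(C)-2}{\deg_H C} \;=\; \frac{m a_i\bigl(mk - \sum_{j\neq i}a_j\bigr)}{m a_i k} \;=\; m - \frac{\sum_{j\neq i}a_j}{k},
\]
and the same casework as in step one shows that this is strictly positive for each $i$ at the stated thresholds. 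Taking $\epsilon$ to be the minimum of $\epsilon_{\mathrm{int}}$ and these four boundary ratios concludes.

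The hardest step will be the Bertini argument on $D_i$, which is a possibly singular weighted projective surface: I need to use the avoidance of $\operatorname{Sing}(D_i)$ by very general $X$ together with Bertini's smoothness and connectedness on the smooth locus to rule out reducible configurations of $X\cap D_i$ whose components might violate the desired inequality, and then verify uniformly over the four choices of $i$ that the boundary ratio remains positive in each non-excluded shape. It is also reassuring that the excluded cases $\mathbb{P}(1,1,1,n)$ and $\mathbb{P}(1,1,2,3)$ are precisely those for which either $\epsilon_{\mathrm{int}}$ or some boundary ratio collapses at $m=2$, so the threshold shift for $\mathbb{P}(1,1,2,3)$ to $m\ge 3$ falls out naturally from the same computation.
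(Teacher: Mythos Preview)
Your proposal is correct and follows essentially the same two-step structure as the paper: invoke Proposition~\ref{prop2} for curves meeting the open torus, then handle boundary curves by adjunction on the coordinate divisors $D_i\cong\mathbb{P}(a_0,\dots,\widehat{a_i},\dots,a_3)$. Your case analysis via $(a_2-1)(a_3-1)>3$ is a cosmetic variant of the paper's split on $a_2\ge4$ versus $a_2\in\{2,3\}$, and your explicit computation of the boundary ratio $m-(\sum_{j\ne i}a_j)/k$ together with the Bertini irreducibility argument for $X\cap D_i$ makes explicit what the paper compresses into ``a very general $X$ intersects any toric invariant divisor at a smooth curve'' and ``$K_C$ has positive degree''.
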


\begin{proof}
    Let $X$ be a very general surface of $|mH|$ of a weighted projective 3-fold with isolated singularities $\mathbb P(a_0,a_1,a_2,a_3)$. Then by Proposition \ref{prop2}, we have that if $m > (a_0 + a_1+a_2 + a_3)/a_0a_1a_2 a_3 + 1$, then $X$ is algebraically hyperbolic outside the toric boundary.

    We note that $a_0+a_1+a_2+a_3 < a_0a_1a_2a_3$ if $\mathbb{P}(a_0, a_1, a_2, a_3) \neq \mathbb{P}(1, 1, 1, n)$ or $\mathbb{P}(1, 1, 2, 3)$. To show this, assume here $a_0 \leq a_1 \leq a_2 \leq a_3$. If $a_2 \geq 4$, then $a_2a_3 > a_0+a_1+a_2+a_3$. If $a_2 < 4$, then we only have the case $a_2 = 2,3$, leading to $\mathbb{P}(1,1,2,a_3)$ with $a_3>3$ (so $a_3 \geq 5$), $\mathbb{P}(1,1,3,a_3)$, and $\mathbb{P}(1,2,3,a_3)$. In each of these cases, the inequality holds. So for $\mathbb{P}(a_0, a_1, a_2, a_3) \neq \mathbb{P}(1, 1, 1, n)$ or $\mathbb{P}(1, 1, 2, 3)$, we have $m \geq 2$. For $\mathbb{P}(1,1,2,3)$, we have $m \geq 3$.

    It remains to show that the intersection of $X$ with the toric boundary is algebraically hyperbolic. Note that the intersection is a union of finitely many curves, so it suffices to show there are no rational and elliptic curves in the intersection. We note that a very general $X$ intersects any toric invariant divisor at a smooth curve, so it suffices to show that the canonical divisor of that curve has positive degree. Each toric invariant divisor is a weighted projective surface with one coefficient removed, $\mathbb{P}(a_0,\dots,\widehat{a_i},\dots,a_3)$. Say $a_0$ is removed, then, $K_C = (K_{\mathbb{P}(a_1,a_2,a_3)} + \mathcal O(mk))|_C =\mathcal O(-a_1-a_2-a_3 + m a_0a_1a_2a_3)|_C$.  We have $a_1+a_2+a_3 < m a_0a_1a_2a_3$ for $m \geq 2$.
\end{proof}
\begin{remark}
We know $m=1$ for $\mathbb P(1,1,2,3)$, $X$ is not algebraically hyperbolic. So the remaining cases for 3-dimensional weighted projective spaces are $m = 3$ for $\mathbb P(1,1,1,2)$ and $\mathbb P(1,1,1,3)$, $m=2$ for $\mathbb P(1,1,2,3)$, and $m=1$ for $\mathbb P(a_0,a_1,a_2,a_3)\neq \mathbb P(1,1,1,n) \text{ or } \mathbb P(1,1,2,3)$.
\end{remark}

\begin{proposition}\label{prop4}
  
Let $\mathbb{P} = \mathbb{P}(a_0,\dots,a_n)$ be a weighted projective space with isolated singularities. Let
\[
\Theta := \max_{\substack{I \subset \{0,\dots,n\} \\ |I|\ge 4}} \left\{\frac1{\prod_{i\notin I}a_i} \left( \frac{\sum_{i \in I} a_i}{\prod_{i\in I} a_i} + (|I|-3) \right)\right\}.
\]
Then for every $m > \Theta$, a very general hypersurface $X \in |mH|$ is algebraically hyperbolic.
\end{proposition}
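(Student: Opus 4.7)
The plan is to stratify integral curves $C \subset X$ by the minimal toric stratum $\mathbb{P}_I := \mathbb{P}(a_i : i \in I)$ containing them and to apply Proposition~\ref{prop2} recursively on each $\mathbb{P}_I$. A preliminary reduction is that for each $I$, the restriction map $H^0(\mathbb{P}, \mathcal{O}(mk)) \to H^0(\mathbb{P}_I, \mathcal{O}(mk)|_{\mathbb{P}_I})$ is surjective—any weighted homogeneous polynomial in the variables $\{x_i : i \in I\}$ lifts by extending with zero—so $Y_I := X \cap \mathbb{P}_I$ is itself a very general hypersurface in its linear system on $\mathbb{P}_I$. Moreover, each $\mathbb{P}_I$ is a well-formed weighted projective space with pairwise coprime weights, hence has isolated singularities, and the machinery of the previous sections applies to it.

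Given an integral curve $C \subset X$, let $I$ be minimal with $C \subset \mathbb{P}_I$, so $C$ meets the open torus orbit of $\mathbb{P}_I$. The main case is $|I| \geq 4$, where $\dim \mathbb{P}_I \geq 3$. Here $H$ restricts to $\mathbb{P}_I$ as $(\prod_{j \notin I} a_j) H_I$, where $H_I$ generates $\mathrm{Pic}(\mathbb{P}_I)$, so $Y_I \in |m' H_I|$ with $m' = m \prod_{j \notin I} a_j$. Proposition~\ref{prop2} applied to $\mathbb{P}_I$ yields algebraic hyperbolicity of $Y_I$ away from its own toric boundary provided
\[
m' > \frac{\sum_{i \in I} a_i}{\prod_{i \in I} a_i} + (|I| - 3),
\]
which rearranges to exactly the $I$-indexed term of $\Theta$. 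Converting the resulting slope from $H_I$ to $H$ introduces a factor of $\prod_{j \notin I} a_j$, but the slope remains positive. The case $|I| = n+1$ (the interior of $\mathbb{P}$) is the initial application of Proposition~\ref{prop2} and corresponds to the $I = \{0,\dots,n\}$ term.

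Two edge cases remain. When $|I| = 3$, $Y_I$ is a curve in the $2$-dimensional $\mathbb{P}_I$, so Proposition~\ref{prop2} does not apply. Instead, for very general $X$, Bertini (together with base-point-freeness of $\mathcal{O}(mk)|_{\mathbb{P}_I}$) gives that $Y_I$ is a smooth irreducible curve avoiding the finitely many singular points of $\mathbb{P}_I$, and adjunction yields $K_{Y_I} = \mathcal{O}(mk - \sum_{i \in I} a_i)|_{Y_I}$. Its degree is positive whenever $mk > \sum_{i \in I} a_i$, which is implied by $m > \Theta$ through the $I = \{0,\dots,n\}$ term, since $\Theta \geq \sum_{i=0}^{n} a_i / k$. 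Hence $g(Y_I) \geq 2$, and because there are only finitely many $I$ with $|I| = 3$, a uniform positive slope covers all such boundary curves. When $|I| \leq 2$, the stratum $\mathbb{P}_I$ has dimension at most $1$: a torus fixed point does not lie in a very general $X$ by base-point-freeness of $\mathcal{O}(mk)$, and a toric $\mathbb{P}^1$ is not contained in a very general $X$ by a codimension count.

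Taking the minimum of the finitely many positive slopes produced above yields a single $\epsilon > 0$ such that $2g(C) - 2 \geq \epsilon \deg_H C$ for every integral $C \subset X$, establishing algebraic hyperbolicity. The main obstacle I anticipate is ensuring that ``very general'' descends cleanly to every stratum (reducing to the surjectivity of restriction on sections, which is straightforward on weighted projective spaces) and carefully tracking the normalization between $H$ on $\mathbb{P}$ and the generators $H_I$ on each $\mathbb{P}_I$. A small bookkeeping point is verifying that the auxiliary $|I| = 3$ inequality $mk > \sum_{i \in I} a_i$ is automatically forced by $m > \Theta$ rather than needing to be imposed separately.
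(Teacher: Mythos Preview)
Your approach is correct and essentially the same as the paper's: stratify by the minimal toric stratum $\mathbb{P}_I$ containing a curve, use surjectivity of restriction on sections to descend ``very general'' to each stratum, apply Proposition~\ref{prop2} on each $\mathbb{P}_I$ with the rescaled multiple $m' = m\prod_{j\notin I}a_j$, and then take the minimum of the finitely many positive slopes. The only organizational difference is that the paper invokes Proposition~\ref{prop3} for the $|I|=4$ strata, which already packages the adjunction computation on the $|I|=3$ boundary curves, whereas you unpack that computation directly (and add the easy $|I|\le 2$ check); the content is identical. Your verification that the $|I|=3$ inequality $mk > \sum_{i\in I}a_i$ is forced by the $I=\{0,\dots,n\}$ term of $\Theta$ is correct and is the same observation the paper uses inside Proposition~\ref{prop3}.
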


\begin{proof}
Since we work with $\mathbb P$ and its toric strata, we consider a very general $X$ of $mH$ in $\widetilde{\mathbb P}$ to be a very general $X$ of $\mathcal O(mk)$ in $\mathbb P$. We note that for a weighted projective space $\mathbb{P}(a_0,\dots,a_n)$, the toric boundary divisors are exactly 
\[
D_i = \mathbb{P}(a_0,\dots,\widehat{a_i},\dots,a_n), \quad i \in \{0,\dots,n\}.
\] 
Let $I \subset \{0,\dots,n\}$, $|I|\geq 4$, then we have $\mathbb{P}(a_i, i \in I)$ as a stratum of $\mathbb{P}(a_0,\dots,a_n)$ in the sense of hyperbolicity. That is, if we show that for each $I \subset \{0,\dots,n\}$ with $|I|\geq 5$, a very general section of $\mathcal O(mk)|_{\mathbb{P}_I}$ in $\mathbb{P}_I$ is algebraically hyperbolic with respect to the ample $\mathcal O(k)|_{\mathbb{P}_I}$ outside the toric boundary of $\mathbb{P}_I$, and for $|I|=4$ a very general section of $\mathcal O(mk)|_{\mathbb{P}_I}$ in $\mathbb{P}_I$ is algebraically hyperbolic with respect to $\mathcal O(k)|_{\mathbb P_I}$. Then since in our case, the restriction maps of the global sections of $\mathcal O(mk)$ to each stratum $H^{0}\!\big(\mathbb{P},\, \mathcal{O}_{\mathbb{P}}(mk)\big)
\rightarrow
H^{0}\!\big(\mathbb{P}_{I},\, \mathcal{O}_{\mathbb{P}_{I}}(mk)\big)$ are surjective. So taking the finite intersection of preimages of very general sets one in each $H^{0}\!\big(\mathbb{P}_{I},\, \mathcal{O}_{\mathbb{P}_{I}}(mk)\big)$, we get a very general set in $H^{0}\!\big(\mathbb{P},\, \mathcal{O}_{\mathbb{P}}(mk)\big)$, such that for any hypersurface $X$ in the set, $\mathbb P_I \cap X$ is algebraically hyperbolic for each $|I| = 4$ and $\mathbb P_I \cap X$ is algebraically hyperbolic outside the toric boundary for each $|I| \geq 5$, so together they imply that $X$ is algebraically hyperbolic. We note that each stratum $\mathbb P_I$ is a weighted projective space with isolated singularities, so we may apply Proposition \ref{prop2}, \ref{prop3}. To find an $\epsilon$, we take the minimum $\epsilon_I$ of each stratum. We note that algebraic hyperbolicity is independent of ample divisor, but here we fix the ample divisor $\mathcal O(k)$ to give a global $\epsilon$ for a very general $X$. 

We see that the inequality $m>\Theta$ produces a sufficient condition for algebraic hyperbolicity. This is because by Proposition \ref{prop3},
\[
m > \frac{1}{\prod_{i\notin I}a_i}(\frac{\sum_{i \in I} a_i}{\prod_{i\in I} a_i} + (|I|-3)), \quad \text{for } |I|=4,
\] 
means that a very general section of $mq_I\mathcal O(k_I)  = \mathcal O(mk)|_{\mathbb{P}_I}$, where $\mathcal O(k_I):= \mathcal O(\prod_{i\in I} a_i)$ and $q_I := \prod_{i \notin I} a_i$, is algebraically hyperbolic with respect to $\mathcal O(k_I)$. So this implies a very general section of $\mathcal O(mk)|_{\mathbb{P}_I}$ is algebraically hyperbolic with respect to $\mathcal O(k)|_{\mathbb{P}_I}$ with $\epsilon_I = \frac 1{q_I}(\frac{\sum_{i \in I} a_i}{\prod_{i\in I} a_i} + (|I|-3))$. Similarly, for $|I|\geq5$ by Proposition \ref{prop2},
\[
m >  \frac{1}{\prod_{i\notin I}a_i}(\frac{\sum_{i \in I} a_i}{\prod_{i\in I} a_i} + (|I|-3)),
\] 
implies that a very general section of $mq_I\mathcal O(k_I)  = \mathcal O(mk)|_{\mathbb{P}_I}$ is algebraically outside the toric boundary of $\mathbb P_I$ with respect to $\mathcal O(k)|_{\mathbb{P}_I}$ with $\epsilon_I = \frac 1{q_I}(\frac{\sum_{i \in I} a_i}{\prod_{i\in I} a_i} + (|I|-3))$. So we get a very general section of $\mathcal O(mk)$ is algebraically hyperbolic with respect to $\mathcal O(k)$ with an $\epsilon = \operatorname{min}\{\epsilon_I, |I|\geq4\}$.
   
\end{proof}

\begin{corollary}\label{cor:uniform}
For $n\ge3$, if $m\ge 2n$, then 
$X$ is algebraically hyperbolic. If at least one weight $a_i\ge2$, then
for $m> \frac{3n}{2}-1$, $X$ is algebraically hyperbolic. Let $\mathbb{P}=\mathbb{P}(1,\ldots,1,n)$ be a weighted projective space of dimension $l\ge3$. If $
m>\,l-1+\frac{l}{n},$
then $X$ is algebraically hyperbolic.
\end{corollary}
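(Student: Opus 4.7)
The plan is to invoke Proposition~\ref{prop4} in all three parts, reducing the task to bounding
\[
\Theta = \max_{\substack{I \subset \{0,\dots,n\} \\ |I|\geq 4}} \Theta_I, \qquad \Theta_I := \frac{1}{\prod_{i\notin I}a_i}\left(\frac{\sum_{i\in I}a_i}{\prod_{i\in I}a_i} + (|I|-3)\right).
\]
The crucial preliminary step is to show that this maximum is always attained at the full index set $I=\{0,\dots,n\}$. Setting $K := \prod_i a_i$ and $Q_I := \prod_{i\notin I} a_i$, I would rewrite $\Theta_I = \frac{\sum_{i\in I}a_i}{K} + \frac{|I|-3}{Q_I}$; a direct calculation then gives, for any $j\notin I$,
\[
\Theta_{I\cup\{j\}} - \Theta_I = \frac{a_j}{K} + \frac{(a_j-1)(|I|-2)+1}{Q_I},
\]
which is strictly positive as soon as $a_j\geq 1$ and $|I|\geq 2$. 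Iterating from any starting $I$ with $|I|\geq 4$ to the full set, one concludes that $\Theta = \frac{\sum_i a_i}{\prod_i a_i} + (n-2)$.

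With the reduction in place, each of the three statements follows from a short estimate of $\sum_i a_i/\prod_i a_i$ using the reciprocal-sum identity
\[
\frac{\sum_{i=0}^n a_i}{\prod_{i=0}^n a_i} = \sum_{i=0}^n \frac{1}{\prod_{j\neq i} a_j}.
\]
For part (a), each of the $n+1$ summands is at most $1$ (since every $a_j\geq 1$), giving $\Theta \leq (n+1)+(n-2) = 2n-1$, so $m\geq 2n$ suffices. For part (b), if some $a_{i_0}\geq 2$, then for every $i\neq i_0$ the denominator $\prod_{j\neq i} a_j$ contains the factor $a_{i_0}$, so that summand is at most $1/2$; bounding the single term $i=i_0$ by $1$ yields $\sum_i a_i/\prod_i a_i \leq n/2+1$ and hence $\Theta \leq 3n/2-1$. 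For part (c), the weighted projective space $\mathbb{P}(1,\dots,1,n)$ of dimension $l$ has $\sum_i a_i = l+n$ and $\prod_i a_i = n$, so $\Theta = l/n + (l-1)$, which matches the stated threshold.

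The main obstacle is the monotonicity step, which pins down where $\Theta$ is attained. Without the reformulation that splits $\Theta_I$ via the common denominator $K$, checking this directly would lead to a messy inequality over subsets of $\{0,\dots,n\}$; but the split makes the positivity of $\Theta_{I\cup\{j\}}-\Theta_I$ manifest. Once that is handled, the three bounds follow in essentially one line each from the reciprocal-sum identity, with no further case analysis on the configuration of the weights.
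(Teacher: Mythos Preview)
Your argument is correct, and in one respect it is cleaner than the paper's own proof. Both approaches invoke Proposition~\ref{prop4} and reduce to bounding $\Theta$, but they diverge in how the maximum over $I$ is handled. The paper first discards the outer factor via $\prod_{i\notin I}a_i\ge 1$, obtaining $\Theta\le\max_{|I|\ge4}\bigl(\tfrac{\sum_{i\in I}a_i}{\prod_{i\in I}a_i}+(|I|-3)\bigr)$, and then estimates the inner ratio uniformly in $I$ (by $|I|$ in general, and by $\tfrac{|I|+1}{2}$ when some weight is $\ge2$). You instead establish the monotonicity $\Theta_{I\cup\{j\}}>\Theta_I$ via the splitting $\Theta_I=\tfrac{\sum_{i\in I}a_i}{K}+\tfrac{|I|-3}{Q_I}$, which pins the maximum at the full index set and reduces everything to a single estimate of $\tfrac{\sum a_i}{\prod a_i}$ using the reciprocal-sum identity.

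What your approach buys is airtightness in part~(b): after the paper drops $\prod_{i\notin I}a_i$, its bound $\tfrac{\sum_{i\in I}a_i}{\prod_{i\in I}a_i}\le\tfrac{|I|+1}{2}$ tacitly assumes the large weight lies in $I$, which fails for subsets consisting entirely of $1$'s (where the ratio equals $|I|$). The stated inequality $\Theta\le\tfrac{3n}{2}-1$ is nonetheless true---for such $I$ one has $Q_I\ge2$, so $\Theta_I\le\tfrac{2|I|-3}{2}\le n-\tfrac32<\tfrac{3n}{2}-1$---but the paper does not spell this out. Your monotonicity step removes the need for any such case split. Conversely, the paper's route avoids proving the auxiliary monotonicity lemma; it is shorter on the page at the cost of this small gap in part~(b).
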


\begin{proof}
Since $\prod_{i\notin I}a_i\ge1$, we have
\[
\Theta \le \max_{|I|\ge4}\Bigl(\frac{\sum_{i\in I}a_i}{\prod_{i\in I}a_i}+(|I|-3)\Bigr)
\le \max_{|I|\ge4}\bigl(|I|+(|I|-3)\bigr)=2n-1.
\]
If not all $a_i=1$, the worst case for $\frac{\sum_{i\in I}a_i}{\prod_{i\in I}a_i}$ is
one weight is $2$ and the other are $1$, giving
$\frac{\sum a_i}{\prod a_i}\le\frac{|I|+1}{2}$. Hence
\[
\Theta \le \max_{|I|\ge4}\bigl(\frac{|I|+1}{2}+(|I|-3)\bigr) = \max_{|I|\ge4}\bigl(\frac{3|I|-5}{2}\bigr)
= \frac{3(n+1)-5}{2}=\frac{3n}{2}-1.
\]

For $\mathbb P(1,1,...,1,n)$ of dimension $l$, we have that $\Theta = l-1+\frac l n$, so for $m> l-1+\frac ln$, we have $X$ is algebraically hyperbolic.
\end{proof}

\begin{remark}
Like in the 3–dimensional case, we may ask for which values of \(m\) a very general
hypersurface \(X \subset \PP(a_0,\ldots,a_n)\) is not algebraically hyperbolic.
A possible approach is to check for lower dimensional strata.

\end{remark}
\begin{remark}
    One may generalize the analysis to weighted projective spaces with non-isolated singularities, we can apply the technique to a toric resolution of $\mathbb P$, $f:\widetilde{\mathbb P}\to\mathbb P$ and consider the line bundle $mH=mf^*\mathcal O(k)$, where $k=\operatorname{lcm}(a_0,...a_n)$, but
    \begin{itemize}
        \item A general hypersurface of $|mH|$ will not be isomorphic to $|\mathcal O(mk)|$ as a general hypersurface of $|\mathcal O(mk)|$ will intersect with the singularity locus non-trivially. So it may not be meaningful as one would like to examine hyperbolicity of a very general hypersurface of $|\mathcal O(mk)|$. 
        \item The canonical divisor $K_X$ would be complicated and cannot be written in terms of $H$ only. If we consider a toric crepant resolution, then we get $K_X = (f^*K_\mathbb P+mH)|_X = (m-\frac{a_0+...+a_n}{\operatorname{lcm}(a_0,...,a_n)})H|_X$. But not all weighted projective spaces admit a toric crepant resolution.
        \item The pullback $H|_X$ in general is base-point-free and big but may not be ample, so if one would like to find a polarization for $X$, one has to find some ample divisor other than $H$.
    \end{itemize}
\end{remark}

\bibliographystyle{plain}
\bibliography{reference}

\end{document}